\theoremstyle{plain}
\newtheorem{thm}{Theorem}
\newtheorem{prop}[thm]{Proposition}
\newtheorem{cor}[thm]{Corollary}
\newtheorem{lem}[thm]{Lemma}
\theoremstyle{definition}
\newtheorem*{defn}{Definition}
\newtheorem*{claim}{Claim}
\newtheorem*{exmp}{Example}
\theoremstyle{remark}
\newtheorem*{rem}{Remark}
\newtheorem*{ack}{Acknowledgments}
\numberwithin{equation}{section}
\newcommand{\RP} {\R\Pj}                                 
\newcommand{\R} {{\mathbb R}}                              
\newcommand{\Pj} {{\mathbb P}}                             
\newcommand{\pos} {\mathcal{P}}		
\newcommand{\PeG} {\pos_{\eG}}		
\newcommand{\K} {\mathcal{K}}   
\newcommand{\KG} {\K G}
\newcommand{\rKG} {{\textcolor{BrickRed}{\mathbb K}G}}
\newcommand{\rK} {{\textcolor{BrickRed}{\mathbb K}}}
\newcommand{\rKGn} [1] {{\textcolor{BrickRed}{\mathbb K}}[{#1}]}
\newcommand{\eG} {EG}
\newcommand{\rc} [2] {{  {\mathfrak {S} }}_{#2}(#1)}   
\newcommand{\rKGi} {{\textcolor{BrickRed}{\mathbb K^*}G}}
\newcommand{\PeGi} {\pos^*_{EG}}
\begin{document}


\title{Colorful Graph Associahedra}

\author{Satyan L.\ Devadoss}
\address{S.\ Devadoss: University of San Diego, San Diego, CA 92110}
\email{devadoss@sandiego.edu}

\author{Mia Smith}
\address{Mia Smith: Proof School, San Francisco, CA 94103}
\email{msmith@proofschool.org}

\begin{abstract}
Given a graph $G$, the graph associahedron is a simple convex polytope whose face poset is based on the connected subgraphs of $G$. With the additional assignment of a color palette, we define the colorful graph associahedron, show it to be a collection of simple abstract polytopes, and explore its properties. 
\end{abstract}

\subjclass[2010]{52B11, 06A07, 05B30}

\keywords{abstract polytope, graph associahedron, coloring}

\maketitle

\baselineskip=17pt

%
%
\section{Introduction}  \label{s:intro}

Given a finite graph $G$, the graph associahedron $\KG$ is a simple polytope \cite{cd} whose face poset is based on tubes, the connected subgraphs of $G$.  For special examples of graphs, $\KG$ becomes well-known, sometimes classical:  when $G$ is a path, a cycle, or a complete graph, $\KG$ results in the associahedron, cyclohedron, and permutohedron, respectively.  Figure~\ref{f:kwtubes} shows the examples for a path and a cycle with three nodes.
\begin{figure}[h]
\includegraphics[width=.9\textwidth]{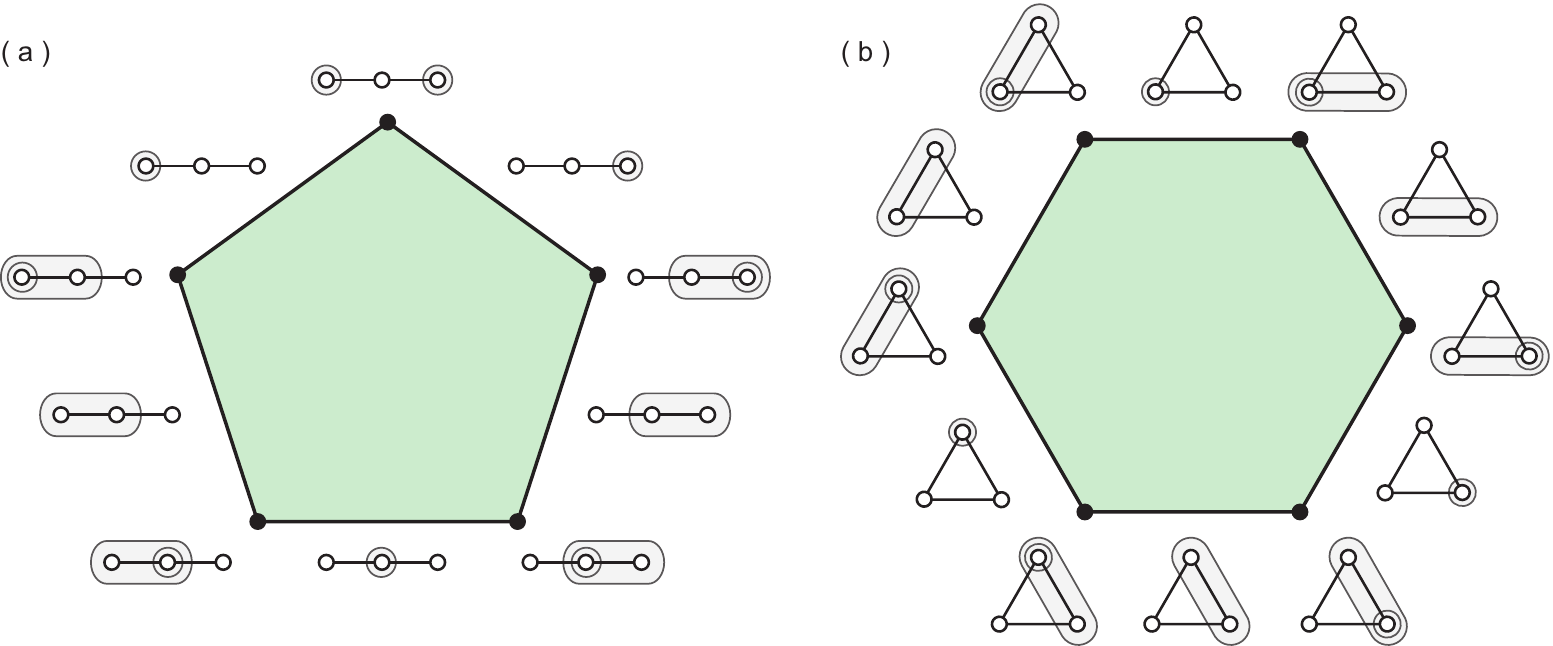}
\caption{Graph associahedron of the (a) path and (b) cycle with three nodes.}
\label{f:kwtubes}
\end{figure}
These polytopes were first motivated by De Concini and Procesi in their work on `wonderful' compactifications of hyperplane arrangements \cite{dp}.  They make appearances in numerous areas, including geometric group theory \cite{djs}, real moduli space of curves \cite{cd}, Heegaard Floer homology \cite{blo}, and biological statistics \cite{mps}.  It is not surprising to see $\KG$ in such a broad range of subjects since the structures of these polytopes capture and expose the fundamental concepts of connectivity and nestings.  There have been numerous extensions, including nestohedra \cite{fei} and Postnikov's generalized permutohedra \cite{pos}.  

This work deals with another such generalization, now with the additional feature of coloring the tubes.  These \emph{colorful} graph associahedra $\rKG$ are  motivated by the work of Araujo-Pardo, Hubard, Oliveros, and Schulte \cite{ahos2} that explores coloring diagonals of triangulations\footnote{Recent work by  Lubiw, Mas\'{a}rov\'{a}, and Wagner \cite{orbit} explores colored triangulations of an arbitrary planar point set, establishing connectivity  through sequences of color diagonal flips, generalizing the work of Lawson \cite{law}.} of an $n$-gon (associahedron) and centrally symmetric triangulations of a $2n$-gon (cyclohedron).  These generalize to posets that form simple abstract polytopes, whose automorphism groups are studied in detail.
The natural bijections \cite{cd} between such diagonalizations  (of polygons and centrally symmetric ones) to tubes (on paths and cycles) stimulate our work.

Figure \ref{f:ksg-path} shows an example of $\rKG$ for the path of 3 nodes, resulting in a 10-gon; contrast this with Figure~\ref{f:kwtubes}(a).
Categorical complexity increases with the number of nodes: For the path with four nodes, $\rKG$ is no longer a polytope but a genus-four handlebody.  And for paths with five or more nodes, $\rKG$ is not even a manifold but an abstract polytope.  

\begin{figure}[h]
\includegraphics{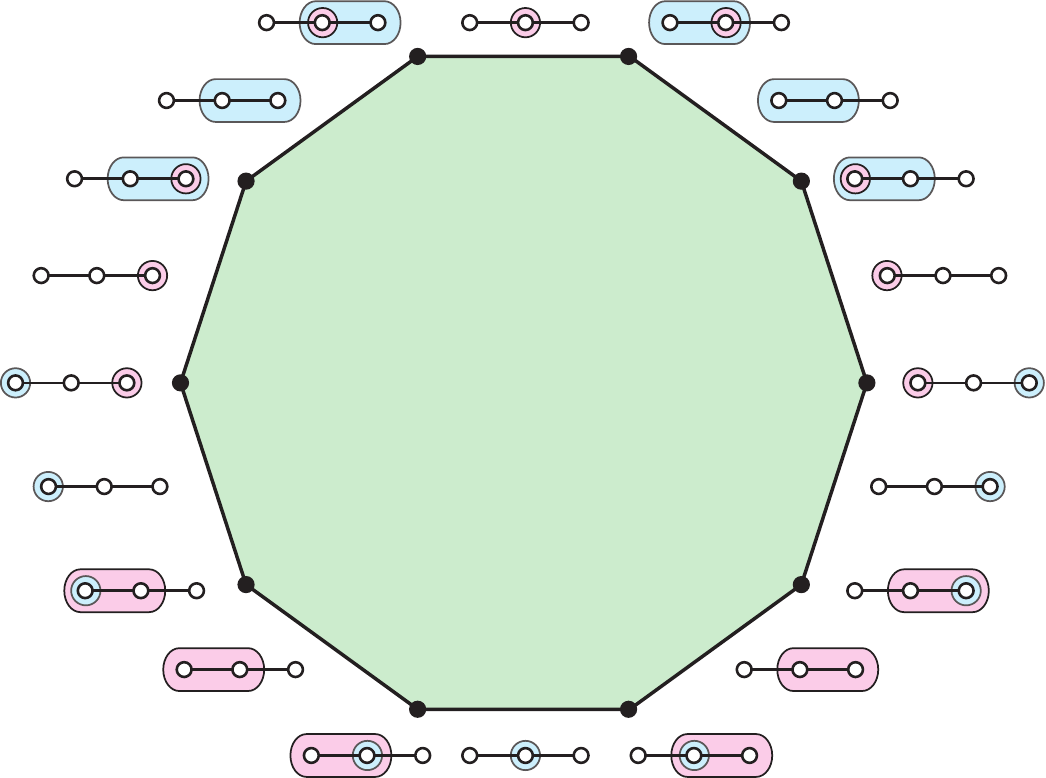}
\caption{Colorful graph associahedron of the path with three nodes.}
\label{f:ksg-path}
\end{figure}

This paper extends the results of  \cite{ahos2} to arbitrary simple graphs $G$.  We first show how the connectivity levels of $G$ stand in obstruction to simply coloring the tubes of $G$.  Thus, colored tubes require larger machinery to be well-defined, notably a color template and associated compatibility structures.  With this in hand, for a simple graph $G$ with $n$ nodes, a definition of the colorful graph associahedron  $\rKG$  is offered and the strongest  possible result proven:  $\rKG$  is a simple abstract polytope of rank $n-1$.

An overview of this paper is as follows: Section~\ref{s:basics} provides  background, while exploring issues of connectivity and cores.  Section~\ref{s:colors} introduces colors, along with the structures of palettes and templates necessary for a well-defined formulation of the colorful graph associahedron.  Section~\ref{s:colorful} showcases the main theorem along with examples and properties, whereas Section~\ref{s:disconnect} deals with disconnected graphs and issues of regularity.  The proofs of the main results are  provided in Section~\ref{s:proofs}.

\begin{ack}
We would like to thank Colin Adams and Egon Schulte for helpful conversations, and support from Williams College and Harvey Mudd College.  The research for this work was finished in summer 2016, and the bulk of the writing occurred in fall 2017 at MSRI, during the program on geometric and topological combinatorics, to which we are grateful.  Devadoss was partially supported by the John Templeton Foundation grant 51894 and takes full responsibility for the tardiness in bringing this work to the mathematics community.
\end{ack}

%
%
\section{Graphs and Definitions}  \label{s:basics}
\subsection{Tubes}

Throughout the paper, let $G$ be a simple graph, following the lead in \cite{cd}.

\begin{defn}
A \emph{tube} of $G$ is a set of nodes whose induced graph is a proper, connected subgraph.
Two tubes are \emph{compatible} if one properly contains the other, or if they are disjoint and cannot be connected by an edge of $G$. A $k$-\emph{tubing} of $G$ is a set of $k$ pairwise compatible tubes.
\end{defn}

\begin{thm} \cite{cd}
Given a simple graph $G$ with $n$ nodes, the \emph{graph associahedron} $\KG$ is a simple convex polytope of dimension $n-1$ whose face poset is isomorphic to the set of valid tubings of $G$, ordered such that $T \prec T'$ if $T$ is obtained from $T'$ by adding tubes. 
\label{t:classic-ga}
\end{thm}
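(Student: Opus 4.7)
The plan is to realize $\KG$ by iterated truncation of the $(n-1)$-simplex $\Delta^{n-1}$ whose vertices are labeled by the nodes of $G$. Every proper face of $\Delta^{n-1}$ corresponds to a nonempty proper subset of nodes, so every tube of $G$ indexes a unique face. I would truncate the face corresponding to each tube in turn, processing tubes in order of increasing cardinality; the resulting polytope is the candidate $\KG$.

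The first step is to verify that hyperplane truncation of a proper face of a simple polytope yields a simple polytope of the same dimension, with one new facet combinatorially modeled on the truncated face times its link. Iterating, the final polytope is a simple convex polytope of dimension $n-1$, and its facets are in bijection with the tubes of $G$: each truncation contributes one new facet indexed by its tube, while the old facets of $\Delta^{n-1}$ corresponding to $(n-1)$-node tubes persist, and those that are not tubes are absorbed by the earlier truncations of their connected components.

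The heart of the proof is the incidence argument: a collection of facets has nonempty intersection in $\KG$ if and only if the corresponding tubes form a pairwise compatible tubing, and in that case the intersection is a single face whose codimension equals the number of tubes. For nested tubes the two truncated facets remain incident via the new link structure. For disjoint tubes with no connecting edge, the faces in $\Delta^{n-1}$ are already disjoint and truncation introduces no new incidences. The delicate case, and the main obstacle, is two disjoint tubes $T_1, T_2$ joined by an edge of $G$: their faces in the simplex meet inside the face indexed by $T_1 \cup T_2$, which is itself a tube. Because truncations proceed by increasing cardinality, the truncation of $T_1 \cup T_2$ happens after those of $T_1$ and $T_2$, and it precisely severs the old intersection, pushing the two truncated facets apart in $\KG$. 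Formalizing this separation by induction on cardinality, and checking that it does not disrupt the good incidences of nested and non-adjacent compatible pairs, is the technical core. Once established, the face poset of $\KG$ is forced to match the tubing poset under the dimension-reversing correspondence that sends a $k$-tubing to the codimension-$k$ face obtained by intersecting the corresponding facets.
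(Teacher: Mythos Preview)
The paper does not supply a proof of this theorem; it is quoted from \cite{cd} as background. Your iterated-truncation strategy is exactly the construction of Carr and Devadoss in that reference, so in spirit you are on the same track as the cited source.

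However, your face--tube correspondence is inverted, and this is a genuine gap. In \cite{cd} the face of $\Delta^{n-1}$ truncated for a tube $t$ is the one spanned by the \emph{complement} $V(G)\setminus t$, not by $t$ itself, and truncations are performed in order of increasing face dimension, i.e.\ \emph{decreasing} tube cardinality. With your direct correspondence the original facets of $\Delta^{n-1}$ are the $(n-1)$-node subsets, and one whose vertex set is disconnected is not ``absorbed'' by truncating its connected components---it survives as an unwanted extra facet. (Try the path on four nodes: the facets with vertex sets $\{1,2,4\}$ and $\{1,3,4\}$ persist, giving too many facets.) Moreover, in the direct correspondence two disjoint tubes $T_1$, $T_2$ span \emph{disjoint} faces of the simplex, so your claim that they ``meet inside the face indexed by $T_1\cup T_2$'' is false as stated. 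Under the complement correspondence your delicate-case analysis becomes correct: the faces for disjoint $T_1$, $T_2$ intersect exactly in the face for $T_1\cup T_2$; when that union is a tube it is larger, hence its complement face has smaller dimension and is truncated \emph{first}, severing the intersection before the faces for $T_1$ and $T_2$ are processed. Swapping the correspondence and reversing the order repairs the outline.
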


The codimension $k$-faces correspond to $k$-tubings of $G$; in particular, the vertices of $\KG$ correspond to \emph{maximal} tubings of $G$, those containing $n-1$ tubes. For a maximal tubing, there is exactly one node of $G$ not contained in any tube, called the \emph{universal node}.  For technical reasons discussed later, we define the entire graph $G$ itself as the \emph{universal tube}.\footnote{This is not an official tube since it is not a proper subgraph of $G$.}  
It is tempting to define $\rKG$ analogous to $\KG$, as the poset of valid color tubings of $G$, ordered by adding and removing color tubes. As we discuss below, this method fails due to issues of connectivity.  

\begin{defn}
A graph $G$ has \emph{connectivity} $k$ if there exists a set of $k$ nodes whose removal disconnects the graph, but there is no set of $k-1$ nodes whose removal disconnects it. 
\end{defn}
 
A hint of the larger problem at hand appears even when considering cycles (connectivity 2) rather than just paths (connectivity 1). Figure~\ref{f:ksg-cycle} displays $\rKG$ for a cycle with three nodes.
Unlike Figure~\ref{f:ksg-path}, the resulting object yields two isomorphic copies of a hexagon, each with a different color schemata.  In particular, when $G$ is an $n$ cycle,  $\rKG$ is made of $n-1$ isomorphic copies.  This small disparity between paths and cycles hints at a looming failure: when the connectivity of $G$ increases beyond 2, the ability to define $\rKG$ simply as the poset of valid color tubings of $G$  becomes impossible.

\begin{figure}[h]
\includegraphics[width=.9\textwidth]{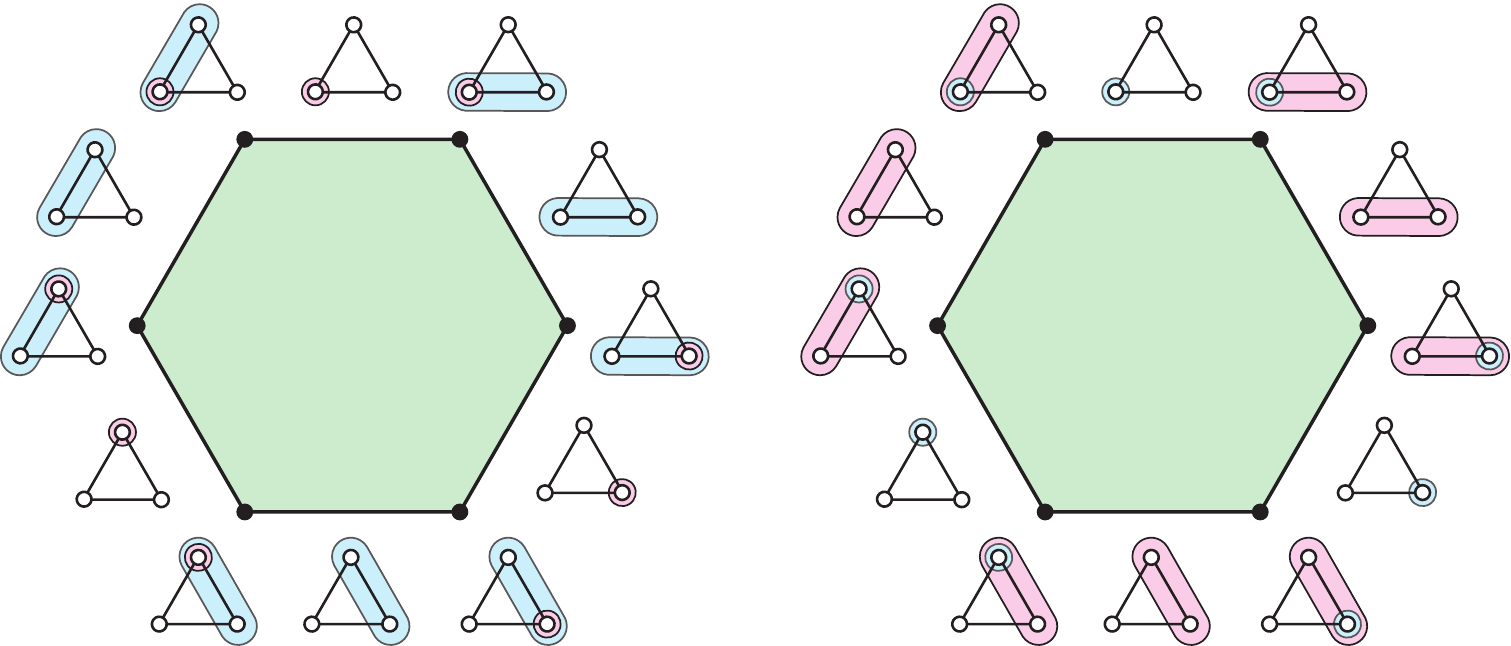}
\caption{Colorful graph associahedron of the cycle with three nodes.}
\label{f:ksg-cycle}
\end{figure}

\subsection{Inner and Outer}

The rest of this section is devoted to examining this phenomena and establishing the definitions necessary for the proper formulation of colorful graph associahedra.

\begin{defn}
Given a graph $G$ with $n$ nodes and connectivity $k$, a tube of $G$ with $n-k$ nodes or less is an \emph{inner} tube, and a tube with more than $n-k$ nodes is an \emph{outer} tube.
\end{defn}

\begin{prop} \label{l:innerinsiderouter}
For any inner tube $t_i$ and outer tube $t_u$ of tubing $T$, we have $t_i\subset t_u$, justifying the nomenclature of this terminology.
\end{prop}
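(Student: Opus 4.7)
The plan is to leverage the compatibility of tubes in a common tubing together with the defining inequality of connectivity. Since $t_i, t_u \in T$, they are compatible, so two cases arise: either (a) one tube properly contains the other, or (b) the two tubes are disjoint and no edge of $G$ joins a node of $t_i$ to a node of $t_u$.

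First I would dispose of case (a): between the two possibilities $t_i \subsetneq t_u$ and $t_u \subsetneq t_i$, the latter would force $|t_u| < |t_i| \leq n-k$, contradicting $|t_u| > n-k$. Thus $t_i \subsetneq t_u$, as desired. For case (b), I would derive a contradiction. Since $t_i$ and $t_u$ are disjoint nonempty node sets, each inducing a connected subgraph of $G$, and no edge of $G$ runs between them, the subgraph induced on $t_i \cup t_u$ breaks into exactly two connected components. Consequently, removing from $G$ the complementary set of nodes outside $t_i \cup t_u$ leaves a disconnected graph, so this complement is a vertex cut. By the definition of connectivity, any vertex cut has size at least $k$, giving $|t_i| + |t_u| \leq n - k$. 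Combined with $|t_u| > n - k$ and $|t_i| \geq 1$, this is absurd.

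The main subtlety will be managing the degenerate low-connectivity situations. When $k = 0$ the graph is already disconnected and no tube can satisfy $|t_u| > n$, while when $k = 1$ an outer tube would need all $n$ nodes, forbidden since tubes are proper subgraphs; in either case the claim is vacuously true. Moreover, in case (b) one must check that the complement of $t_i \cup t_u$ is genuinely nonempty before invoking it as a vertex cut; if it were empty, then $G$ would be the disjoint union of $t_i$ and $t_u$ and hence disconnected, forcing $k = 0$ and returning to vacuity. None of these edge cases is deep, but they are where one is most likely to slip.
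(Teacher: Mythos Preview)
Your argument is correct and is essentially the paper's proof: both use compatibility to reduce to the nested/disjoint dichotomy, and both rule out the disjoint case via the observation that $|t_i \cup t_u| > n-k$ forces the complement to be too small to be a vertex cut (equivalently, forces the induced subgraph on the union to be connected). Your version is simply more explicit about the size comparison in the nested case and about the degenerate situations $k\le 1$, which the paper leaves implicit.
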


\begin{proof}
It is either the case that $t_i$ and $t_u$ are disjoint or that one is a subset of the other. The total number of nodes in their union is greater than $n-k$, which means that the subgraph induced by the union of their nodes is connected. Hence, they cannot be disjoint and so $t_i\subset t_u$. 
\end{proof}

\begin{prop}
\label{p:maxtubing}
For a graph $G$ with $n$ nodes and connectivity $k$, every maximal tubing contains $k-1$ outer tubes and $n-k$ inner tubes. In particular, for all $n-k < i < n$, there is precisely one outer tube containing $i$ nodes.
\end{prop}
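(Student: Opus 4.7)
The plan hinges on one key consequence of connectivity $k$: when $|S|\ge n-k+1$, the complement has fewer than $k$ nodes, so removing it keeps $G$ connected; hence every proper subset of $V(G)$ of size in $\{n-k+1,\dots,n-1\}$ is automatically a tube. This lifts all combinatorial obstructions for outer tubes beyond their required nesting with other tubes of the tubing.

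First I would show that the outer tubes in any tubing form a nested chain. If two outer tubes $t_u, t_u'$ were disjoint and could not be connected by an edge, then $V\setminus(t_u\cup t_u')$ would have size at most $n-2(n-k+1)=2k-n-2<k$, and this set would separate $t_u$ from $t_u'$ in $G$ (or be empty, in which case $G$ itself is disconnected), contradicting connectivity $k$. So for a maximal tubing $T$ the outer tubes form a chain $t_1\subsetneq\cdots\subsetneq t_m$.

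Next I would establish the sizes of this chain in three ``add a tube and contradict maximality'' arguments. (Top) If $|t_m|<n-1$, adjoin $t_m\cup\{v\}$ for any $v\notin t_m$; its size lies in $[n-k+2,n-1]$, so it is a tube by the key observation, and it contains every tube of $T$. (Consecutiveness) If $|t_{j+1}|\ge|t_j|+2$, then for any $v\in t_{j+1}\setminus t_j$ the set $t_{j+1}\setminus\{v\}$ has size $\ge n-k+2$ and sits properly between $t_j$ and $t_{j+1}$, hence extends $T$. (Bottom) To show $|t_1|=n-k+1$, first argue that the inner tubes of $T$ form a maximal tubing of the induced graph $G[t_1]$: any additional subtube of $G[t_1]$ compatible with them is a tube of $G$ contained in $t_1$, hence contained in every outer tube, and so extends $T$. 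Then this maximal tubing on $G[t_1]$ has a universal node $v\in t_1$ lying in no inner tube; if $|t_1|>n-k+1$, the set $t_1\setminus\{v\}$ still has size $\ge n-k+1$, so is a tube, compatible with every outer tube (nested inside $t_1$) and with every inner tube (each avoids $v$), again extending $T$.

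Hence the outer chain contains exactly one tube per size $n-k+1,\dots,n-1$, yielding both the uniqueness in the ``in particular'' clause and the count $k-1$; Theorem~\ref{t:classic-ga} then forces the remaining $n-1-(k-1)=n-k$ tubes of $T$ to be inner. The hard part is the lower bound $|t_1|=n-k+1$, because it requires upgrading maximality of $T$ in $G$ to maximality of the induced inner tubing in $G[t_1]$ before the universal-node trick becomes available; the other bounds follow from direct one-node modifications enabled by the key observation.
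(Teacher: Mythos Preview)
Your argument is correct modulo one small omission: the Top/Consecutive/Bottom trichotomy presupposes a nonempty outer chain, so the case $m=0$ with $k\ge 2$ slips through. The fix is immediate --- the universal node $v$ of the maximal tubing $T$ yields the tube $V(G)\setminus\{v\}$ of size $n-1\ge n-k+1$, compatible with every tube of $T$ --- or, equivalently, run your Bottom step with the universal tube $G$ in place of $t_1$.

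Your route differs from the paper's. Rather than three separate size checks, the paper gives a single uniform argument: if some size $j$ with $n-k<j<n$ is absent, let $t$ be the smallest tube (possibly the universal tube) in $T$ with $|t|>j$, observe that $t$ has at least two nodes not lying in any $t'\subsetneq t$ of $T$, and delete one of them to obtain a tube of size $\ge j\ge n-k+1$ compatible with all of $T$. This covers your Top, Consecutive, and Bottom cases in one stroke. What your decomposition buys is transparency: your Bottom step --- passing to a maximal tubing of $G[t_1]$ and invoking its universal node --- spells out exactly why the chain terminates at size $n-k+1$, whereas the paper's ``at least two free nodes'' claim is asserted without justification and, when unpacked, leans on the same connectivity observation you isolate at the outset.
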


\begin{proof}
By the  logic in Proposition~\ref{l:innerinsiderouter}, given two outer tubes, one must be contained in the other. This means no two outer tubes contain the same number of nodes, so there are at most $k-1$ outer tubes.
Suppose these were fewer; then, for some number of nodes $i$ with $n-k < i < n$, there would be no tube containing precisely $j$ nodes in $T$. Consider the smallest tube $t$ with greater than $j$ nodes; this may be the universal tube. There exist at least two nodes in $t$ that are not in any $t'\subset t$. Taking $t$ and removing one of these two nodes yields a tube which is compatible with all other tubes of $T$, contradicting the maximality of the tubing. 
\end{proof}

\begin{exmp}
Figure~\ref{f:outer} shows three examples of graphs with 4 nodes, with connectivity 3, 2, and 1, respectively.  The left side of each pair shows a maximal tubing, whereas the right side depicts the outer tubes of that tubing.  Note that outer tubes are based on the underlying \emph{connectivity} of the graph, rather than properties such as nestings of tubes.
\end{exmp}

\begin{figure}[h]
\includegraphics[width=.9\textwidth]{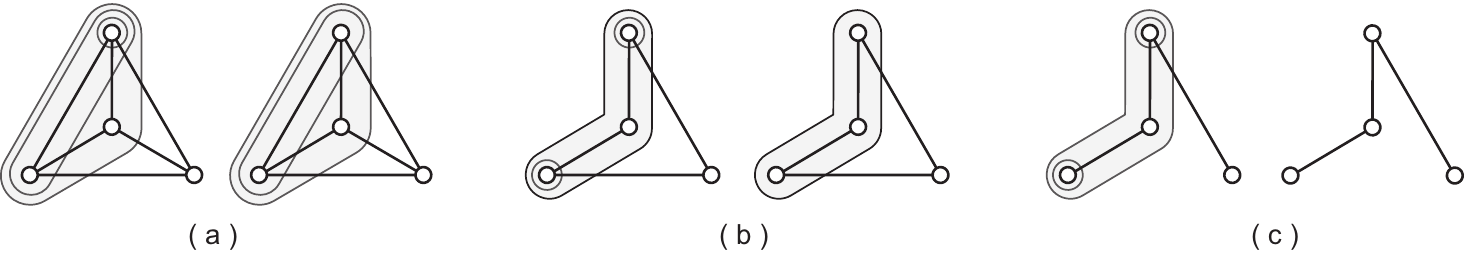}
\caption{Three pairs of graphs with four nodes each; the left side displays a maximal tubing, the right side the outer tubes of this tubing.}
\label{f:outer}
\end{figure} 

\subsection{Cores}

With connectivity being the first ingredient, the second concept  needed for constructing $\rKG$ is a manipulation algorithm for tubes and graphs.  The following definition is motivated by the notion of \emph{reconnected complement} as defined in \cite{cd}; indeed, this construct can be reformulated as an iterative reconnected complement operation.  

\begin{defn} 
Given a graph $G$, a tubing $T$, and a tube $t$ of $T$, let $\rc t T$ be the \emph{core graph} of $t$ in $T$:  The nodes of  $\rc t T$ are those of $t$ not contained in any other tubes $t' \subset t$ of $T$.  There is an edge between such two nodes if they are adjacent in $G$ or if they are connected via a path in some $t' \subset t$.
\end{defn}

\begin{rem}
As an exceptional case, extend this to the core graph of the universal tube $G$, denoted by $\rc G T$, whose nodes are those not contained in any tube of $T$.
\end{rem}

\begin{exmp}
Figure~\ref{f:recon}(a) shows an example of a tubing $T$ of $G$ with two tubes, the larger one denoted as $t$.  Part (b) shows the core  $\rc t T$ and  (c) the core  $\rc G T$ of the universal tube. 
\end{exmp}

\begin{figure}[h]
\includegraphics[width=.9\textwidth]{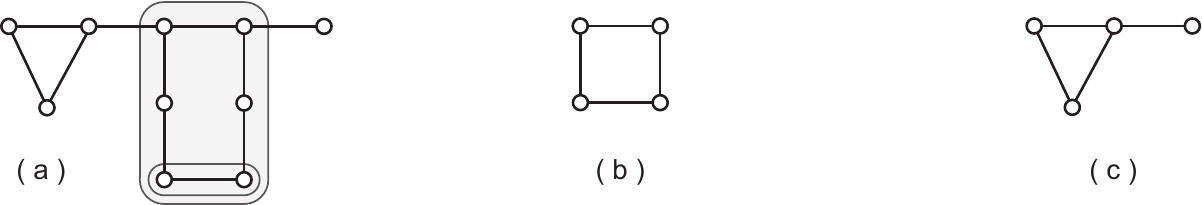}
\caption{A tubing and two core graphs.}
\label{f:recon}
\end{figure}

%
%
\section{Graphs and Colors}  
\label{s:colors}
\subsection{Palettes and Templates}

We now introduce color to the discussion. 
Araujo-Pardo, Hubard, Oliveros, and Schulte \cite{ahos2} construct colorful associahedra and cyclohedra using a set of distinct colors for the labeling of tubes.  We extend this to arbitrary color collections.

\begin{defn}
For a graph $G$ with $n$ nodes, consider its \emph{color palette}, a collection of $n-1$ colors where repeats are permitted. If the  palette has $n-1$ \emph{distinct} colors, we say it is a \emph{full} color palette.
 A \emph{color tube} is a tube of $G$ with a choice of color from the palette.  The colors of the tubes in a \emph{color $k$-tubing} are a choice of $k$ colors from the palette.
\end{defn}

The core graphs form the basis for how color is distributed. In particular, the following bijection allows us to recast tubings of $G$ entirely in the realm of core graphs.

\begin{prop} \label{p:core}
Given a graph $G$, a tubing $T$, and a tube $t$ of $T$, the tubes of  $\rc t T$ are in bijection with  tubes compatible with $T$ and strictly contained in $t$, but not contained in any other tube $t' \subset t$.
\end{prop}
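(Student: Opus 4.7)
My plan is to construct explicit mutually inverse maps $\phi$ and $\psi$ between the two sets. Let $\{t_1, \ldots, t_m\}$ denote the maximal sub-tubes of $t$ in $T$, so that the nodes of $\rc t T$ are precisely $t \setminus \bigcup_i t_i$. The structural fact I would invoke repeatedly is that by compatibility together with maximality the $t_i$ are pairwise disjoint and pairwise non-adjacent in $G$; consequently any $G$-edge joining a node of some $t_i$ to a node of $t \setminus t_i$ must land in $V(\rc t T)$. On the target side I would set $\psi(u) := u \cap V(\rc t T)$, and on the core-graph side, $\phi(x) := x \cup \bigcup\{t_i : t_i \text{ adjacent in } G \text{ to } x\}$.

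To see $\psi(u)$ is a tube of $\rc t T$: it is nonempty because the hypothesis that $u$ is not contained in any $t' \subset t$ rules out $u \subseteq t_i$ for any $i$; it is connected in $\rc t T$ because any $G$-path inside $u$ between two nodes of $\psi(u)$ breaks at its $\psi(u)$-nodes into stretches which, by pairwise non-adjacency of the $t_i$, each lie inside a single $t_j$, and such a stretch collapses to a single edge of $\rc t T$ by the construction. Properness is the subtle piece: if $V(\rc t T) \subseteq u \subsetneq t$, some $t_i$ would be disjoint from $u$; compatibility forces this $t_i$ to be non-adjacent to $u$, and pairwise non-adjacency of the $t_i$ also makes it non-adjacent to every other $t_j$, so $t$ would split into the non-adjacent pieces $t_i$ and $t \setminus t_i$, contradicting that $t$ itself is a tube.

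For $\phi(x)$, connectedness and the strict containment $\phi(x) \subsetneq t$ are routine, and the one delicate check is compatibility with $T$: any $t_i$ not adjacent to $x$ is automatically disjoint from and non-adjacent to every absorbed $t_j$ by pairwise non-adjacency of max sub-tubes, hence disjoint from and non-adjacent to all of $\phi(x)$. The identity $\psi\circ\phi = \mathrm{id}$ is immediate because the absorbed $t_i$'s contribute no nodes of $V(\rc t T)$. For $\phi\circ\psi = \mathrm{id}$, I would show that $t_i \subseteq u$ holds iff $t_i$ is $G$-adjacent to $\psi(u)$: the forward direction uses that any path in $u$ from $t_i$ to $u \setminus t_i$ must exit through $V(\rc t T) \cap u = \psi(u)$ (again because max sub-tubes are non-adjacent to each other), while the reverse uses compatibility together with the hypothesis that $u$ is not contained in any $t' \subset t$ to rule out $u \subseteq t_i$. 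I expect the properness step in the analysis of $\psi$ to be the main obstacle, as it is the single place where the connectedness of $t$ has to be combined with the non-adjacency of the maximal sub-tubes.
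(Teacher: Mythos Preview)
Your proposal is correct and follows essentially the same approach as the paper: the paper's map sends a compatible tube $h$ to $h_1 = h \setminus \bigcup t_i$, which is exactly your $\psi$, and your $\phi$ is the inverse that the paper leaves implicit. The paper's argument is a two-sentence sketch that asserts this is a bijection without checking that $\psi(u)$ is a proper connected subgraph of the core, that $\phi(x)$ is compatible with $T$, or that the two maps are mutually inverse; your version supplies precisely these verifications, and in particular your properness argument (using connectedness of $t$ together with pairwise non-adjacency of the maximal sub-tubes) fills the most substantive gap in the paper's sketch.
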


\begin{proof}
By the construction of $\rc t T$, there is a bijection $\phi$ between the nodes of the core and the nodes of $G$ contained in $t$ but not in any $t' \subset t$.  For a tube $h$ compatible with $T$, its nodes can be partitioned into two sets: set $h_0$ consisting of all nodes contained in some $t'\subset t$, and set $h_1$ consisting of the rest. Mapping $h$ to $\phi(h_1)$ gives our desired bijection of tubes.
\end{proof}

This framing now affords the opportunity to combine the notion of cores and connectivity to define coloring tubes in a proper manner.

\begin{defn}
Let $G$ be a graph with a color palette. A $\emph{color template}$ is the pair $(T,\mathcal O)$: $T$ is an uncolored tubing of $G$, and $\mathcal O$ is a partition of the color palette amongst the tubes of $T$, where the colors associated to each tube $t$ satisfying the following:
\begin{enumerate}
\item The first block is the color of $t$.
\item Let $k_t$ be the connectivity of the core  $\rc t T$. The next $k_t-1$ blocks are the $\emph{outer blocks}$, and contain $k_t-1$  colors to be used on the outer tubes of $\rc t T$.
\item The last block, called the $\emph{inner block}$, contains $n-k_t$ colors to be used on the inner tubes of the core $\rc t T$. 
\end{enumerate}
\begin{figure}[h]
\includegraphics{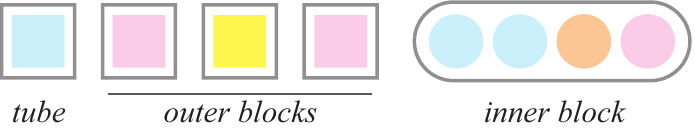}
\end{figure}
The template fixes the color orders for outer blocks whereas the colors in the inner block are unordered.  In the case of the universal tube, the color template identification is similar except for one change: the universal tube is uncolored. 
\end{defn}

\begin{figure}[b]
\includegraphics[width=\textwidth]{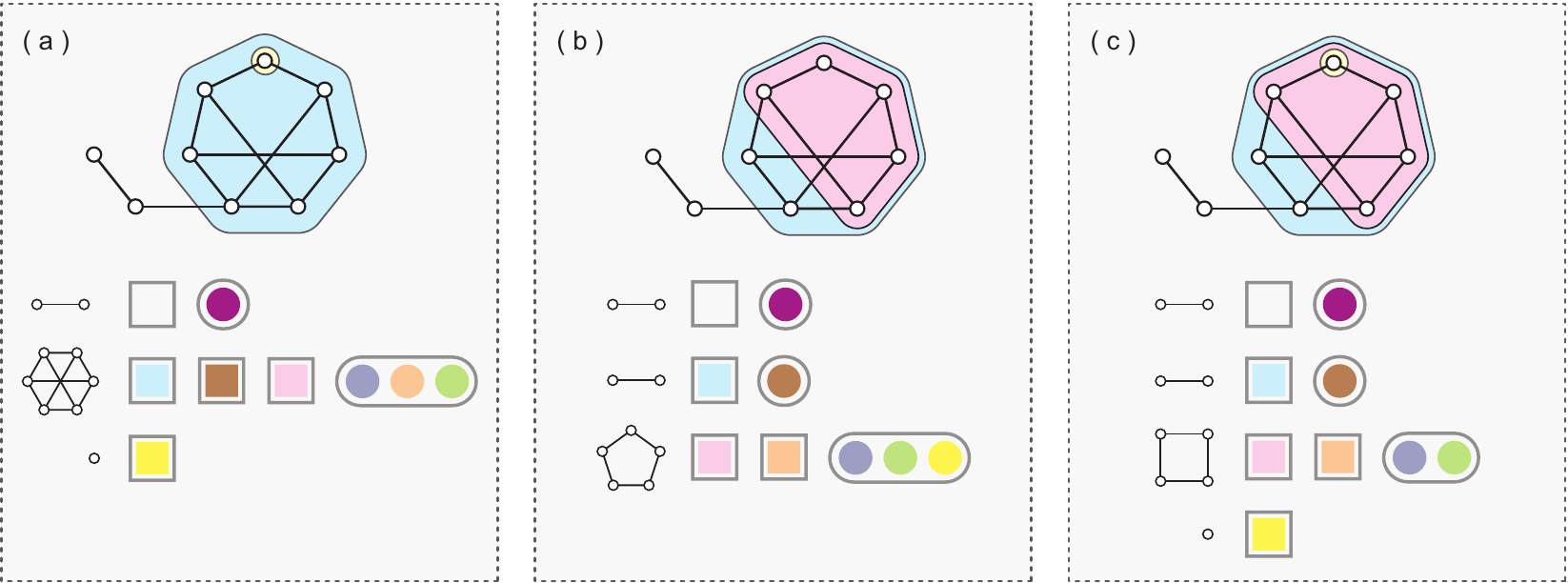}
\caption{Examples of color templates for three tubings.}
\label{f:template-e2}
\end{figure}

\begin{exmp}
Figure~\ref{f:template-e2} shows three examples of color templates $(T,\mathcal O)$ for three distinct tubings of a graph. The color palette uses eight distinct colors and each template partitions the color palette as expected. Each row displays  the core graph $\rc t T$ and the color blocks associated to each tube $t$, with the top row corresponding to the universal tube.
The first block of each row is the color of the tube, the only `visible' color in the shading of the tube.
\end{exmp}

\subsection{Compatibility}

These color templates will form the elements of our poset for $\rKG$.  A template can be thought of as a organizer: as our tubing $T$ is altered, by adding and removing tubes, the template gives us rules for success to ensure consistency. 
For color templates $(T, \mathcal O)$ and $(T', \mathcal O')$ to be compatible, we first require tube compatibility in the classical sense, where tubing $T$ can be formed from $T'$ by adding tubes.  With the addition of color,  the partitions of the color palette $\mathcal O$ and $\mathcal O'$ also need to be compatible, where  $\mathcal O$ can be formed by further partitioning $\mathcal O'$ in a way that respects the color order. This is now made precise.

\begin{rem}
For the color template  compatibility of $(T, \mathcal O)$ and $(T', \mathcal O')$, it is sufficient to consider when $T$ differs from $T'$ by only one tube.   For all other cases, an iterative procedure can be used to extend the following definition.
\end{rem}

\begin{defn}
Let $t$ be a tube of graph $G$, with tubings $T'$ and $T = T' \cup \{t\}$.  Moreover, let $t_*$ be the smallest tube in $T$ that contains $t$; if no such tube exists, let $t_*$ be the universal tube $G$. Two color template  $(T, \mathcal O) \prec (T', \mathcal O')$ are \emph{compatible} if the following holds:

\begin{enumerate}
\item In $\mathcal O$, the color blocks for all tubes in $T \setminus \{t, t_*\}$ must be the same as in $\mathcal O'$.

\item The $k=|\rc{t_*}{T}|$ colors of $t_*$ in $\mathcal O$ are the first $k$ colors of $t_*$ in $\mathcal O'$, preserving  partial order. If inner block colors are included in the first $k$ colors, then any of these colors may be selected.

\item The $|\rc{t}{T}|$ colors associated to $t$ are colors $k+1$ to $|\rc{t_*}{T'}|$ of $t_*$ in $\mathcal O'$, preserving  partial order. If the first block of $t$ together with the outer blocks in $\rc{t}{T}$ outnumber the outer blocks amongst colors $k+1$ to $|\rc{t_*}{T'}|$ of $t_*$ in $\mathcal O'$, then any inner block colors may be selected.
\end{enumerate}
\end{defn}

\begin{exmp}
Figure~\ref{f:compatibility} shows that the color template in (a) is compatible with the color template in (e) via the iterative addition of compatible tubes. Observe that at each step, when a tube $t$ is added, the colors for $t$ are selected starting from the end of the ordered partition associated to $t_*$. Progressing from (b) to (c),  $\rc t T$ has connectivity 2 but the last four colors associated to $t_*$ in $T'$ all lie in the inner block; per requirement (3), two colors are arbitrarily selected from the inner block, blue for the tube $t$ and pink for single outer block. Progressing from (c) to (d),  $\rc{t_*}{T}$ has higher connectivity than $\rc{t_*}{T'}$ and thus more outer blocks. Per requirement (2), green is arbitrarily selected to fill the outer block $t_*$.  
\end{exmp}

\begin{figure}[h]
\includegraphics[width=\textwidth]{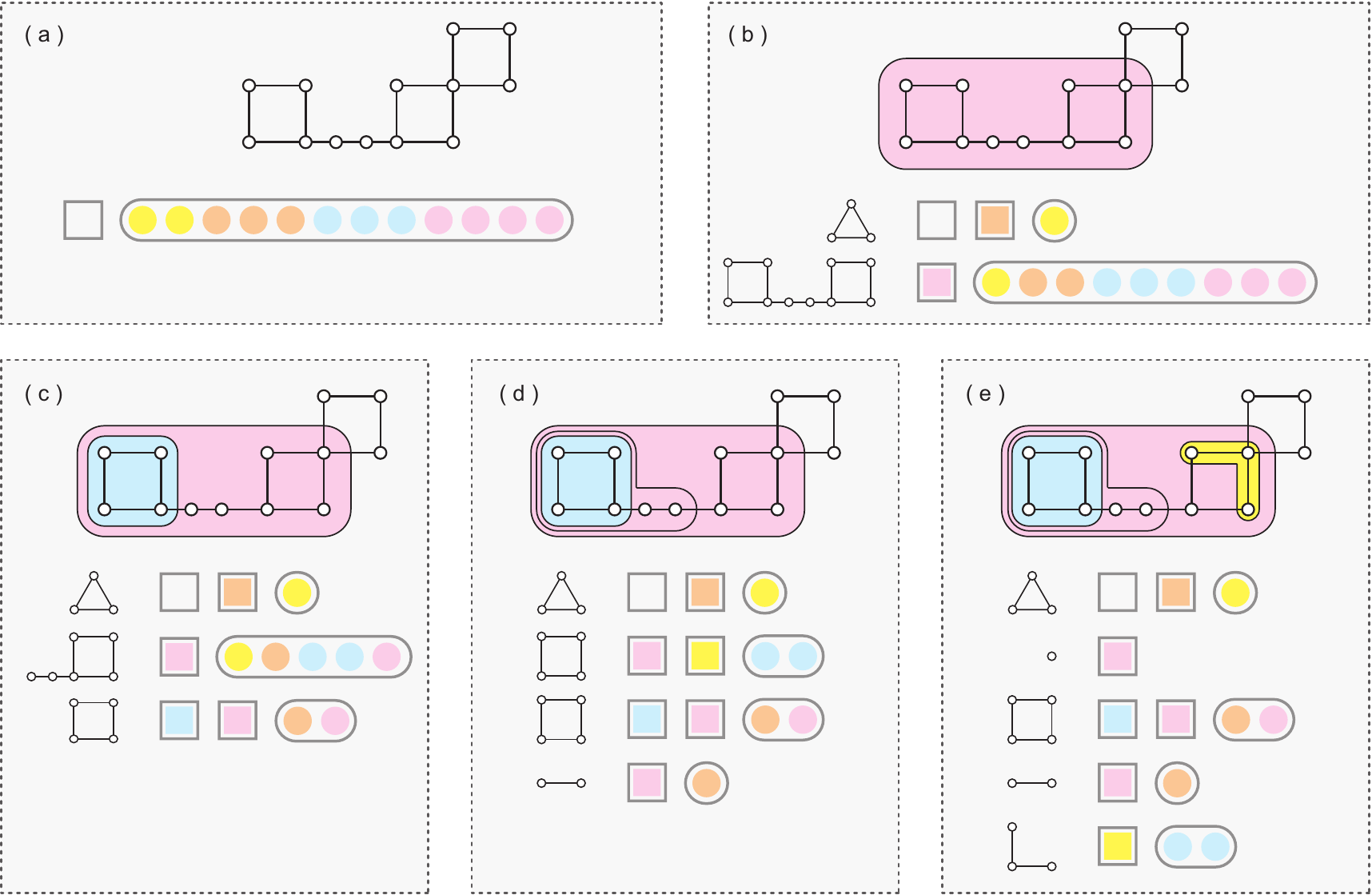}
\caption{Compatibility of color templates.}
\label{f:compatibility}
\end{figure}

\begin{defn}
For a simple graph $G$ with a color palette, the \emph{colorful graph associahedron} $\rKG$ is the collections of posets of color templates $(T, \mathcal O)$ of $G$, with the partial ordering $\prec$ defined above.
\end{defn}

\noindent The proof of Theorem~\ref{t:main} shows this  to be well-defined.

%
%
\section{Colorful Graph Associahedra}  \label{s:colorful}
\subsection{Abstract Polytopes}

We begin with some foundations, culminating in the formulation of one of our main theorems.  The reader is encouraged to explore \cite{ms} for a wealth of information on abstract polytopes.  The elements of a poset $\pos$ are its \emph{faces}, and  two faces $f$ and $g$ are \emph{incident} if $f \preceq g$ or $g \preceq f$.  By convention, there is a face $f_\emptyset$ at rank $-1$ corresponding to the empty set, and faces of rank $0$ are the vertices of $\pos$. In general, the \emph{rank} of face $f$ is defined as $j-2$, where $j$ is the maximum number of faces in any chain of faces $f_\emptyset \prec f_0 \prec f_1 \dots \prec f$.
A \emph{flag} of poset $\pos$ is a totally-ordered set of faces of maximal length, where two flags are \emph{adjacent} if they differ from each other in precisely one face. A poset is \emph{flag-connected} if any two flags $\Phi$ and $\Psi$ can be joined by a sequence of adjacent flags
$$\Phi = \Phi_0, \ \Phi_1, \ \cdots, \ \Phi_n = \Psi,$$ 
and is \emph{strongly flag-connected} with the additional restriction that $\Phi \cap \Psi \subset \Phi_i$, the intersection held constant throughout the sequence. 

\begin{defn}
A poset $\pos$ is an \emph{abstract polytope} of rank $n$ if it satisfies the following: 
\begin{enumerate}
\item $\pos$ contains a least face and a greatest face.
\item Each flag of $\mathcal{P}$ contains exactly $n+2$ faces.
\item $\mathcal{P}$ is strongly flag-connected. 
\item For incident faces $f$ and $h$ (of ranks $j-1$ and $j+1$, respectively), there are precisely two faces $g_1$ and $g_2$ (of rank $j$) such that $f \prec g_i \prec h$.
\end{enumerate}
\end{defn}

This definition captures combinatorial properties that appear naturally for convex polytopes, such as connectivity (requirement 3) and incidence (requirement 4, sometimes called the `diamond' property).  While geometric properties are lost, a wealth of freedom is gained. For example, the \emph{hemi-icosahedron} is an abstract $3$-polytope whose facets tessellate $\RP^2$, with 10 triangles meeting five at each vertex.  Another example is the \emph{$11$-cell}, an abstract 4-polytope tiled by 11 hemi-icosahedron, containing 55 faces, 55 edges, and 11 vertices. 
The following is one our main results, an analog to Theorem~\ref{t:classic-ga}, whose proof is relegated to Section~\ref{s:proofs}:

\begin{thm} \label{t:main}
Let $G$ be a simple graph with $n$ nodes and connectivity $k$, along with a color palette.  The \emph{colorful graph associahedron} $\rKG$ is a collection of simple abstract polytopes of rank $n-1$, consisting of one abstract polytope for each distinct ordering of $k-1$ colors from the color palette.
\end{thm}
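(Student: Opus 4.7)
The plan is to first isolate the components of $\rKG$, then verify on one such component the four axioms of an abstract polytope followed by simplicity. The main obstacle will be the diamond property, since color data must be propagated uniquely through the compatibility rules.

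First, I isolate the components. Every template contains the universal tube $G$, and its associated partition consists of $k-1$ ordered outer blocks followed by an unordered inner block of $n-k$ colors. By compatibility requirement~(1) the blocks of tubes other than $t$ and $t_*$ are preserved under an elementary move, and when $t_*=G$ requirement~(2) pins down the first $|\rc{G}{T}|$ entries of $G$'s partition, which in particular contains the full outer block since outer entries precede inner ones. Iterating along any chain, the ordered outer block of $G$ is therefore invariant. Thus $\rKG$ decomposes into one subposet $\pos$ for each ordered selection of $k-1$ colors from the palette.

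Fix such a $\pos$. For axioms~(1) and~(2), $\pos$ has a unique greatest face---the template with only the universal tube, the chosen outer block, and the remaining $n-k$ colors in the inner block---and I adjoin a formal least face $f_\emptyset$. A maximal flag is a chain of templates whose underlying added-tube sets are $\emptyset = T_0 \subsetneq T_1 \subsetneq \cdots \subsetneq T_{n-1}$ with $|T_i|=i$ and $T_{n-1}$ maximal, by Theorem~\ref{t:classic-ga}. Together with $f_\emptyset$ this is a chain of $n+1$ faces, so each flag has the required $(n-1)+2$ faces and the rank is $n-1$.

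For the diamond property~(4), suppose $f\prec h$ have ranks $j-1$ and $j+1$, with underlying tubings $T_f = T_h \cup \{t_a,t_b\}$. The classical diamond in $\KG$ already supplies the two intermediate tubings $T_h\cup\{t_a\}$ and $T_h\cup\{t_b\}$, so the new task is that each of these lifts to a \emph{unique} color template compatible with both $f$ and $h$. The recipe is that requirement~(3) assigns to a newly added tube $t$ the trailing colors of its container $t_*$ in the larger template, while requirement~(2) retains the leading colors for $t_*$ itself. I plan a case analysis by the relative positions of $t_a$ and $t_b$: (i) $t_a,t_b$ are nested, so they sit in a chain of containers and the colors peeled off at each step are visibly the same on either path; (ii) $t_a,t_b$ are disjoint with distinct containers in $T_h$, where the two refinements commute trivially; and (iii) $t_a,t_b$ are disjoint inside a common container $t_*$, the delicate case, where one must verify that the trailing-colors rule distributes the same multiset of colors to each of $t_a$ and $t_b$ regardless of the order of addition, and that any freedom granted by the inner-block clauses of~(2) and~(3) is pinned down by the data of both $f$ and $h$.

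Finally, strong flag-connectedness~(3) follows by induction on $n$ once the diamond property is in hand: any two flags sharing a sub-flag are joined by adjacent single-face flips, and the replacement face at each step is uniquely determined by~(4). Simplicity at a vertex of $\pos$ is then immediate, because each of the $n-1$ added tubes of a maximal template can be removed in exactly one way, with the colors of the removed tube merged back into the container's partition per~(2). This shows every component of $\rKG$ is a simple abstract polytope of rank $n-1$, and the number of components equals the number of ordered $(k-1)$-tuples of colors drawn from the palette.
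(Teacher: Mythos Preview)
Your argument for strong flag-connectedness has a genuine gap. The diamond property does \emph{not} imply strong flag-connectedness, and ``induction on $n$'' is not a substitute: once two flags share only the top and bottom faces (which they always do), you must actually produce a sequence of adjacent flags between them, and nothing in your sketch does this. Relatedly, your component count only establishes one direction---that the ordered outer block of $G$ is invariant along compatibility chains---but never shows that two templates with the \emph{same} outer block lie in a single connected poset. That second direction is precisely the content of flag-connectedness, so the two gaps are the same gap.

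The paper handles this by reformulating the problem entirely. Rather than working directly with color templates, it introduces the exchange graph $\eG$ (maximal color tubings as nodes, single-tube swaps as edges) and a poset $\PeG$ whose faces are subsets of nodes of $\eG$ reachable by paths preserving a given tubing. In that model the diamond condition becomes a one-line subset count, and strong flag-connectedness is proven by a genuine inductive argument (Proposition~\ref{p:strongflagconn}) along paths in $\eG$, using that vertex-figures are Boolean lattices. The connectivity of $\eG$ itself (Proposition~\ref{p:connect}) is a separate combinatorial lemma---showing any two inner-tube colors can be swapped by an explicit exchange algorithm---and is exactly what supplies the missing direction of your component count. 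Your direct approach to the diamond property via case analysis on $t_a,t_b$ may well be salvageable, but case~(iii) is left as a promise rather than an argument; the reformulation in the paper sidesteps all of that color-template bookkeeping at once.
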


In particular, each abstract polytope in the collection corresponds to a distinct assignment of colors to the $k-1$ outer blocks in the universal tube of $G$. As we will show, this assignment of colors precisely determines whether two flags are connected.

\begin{cor} \label{c:main}
For a monochrome color palette, $\rKG$ is the classical graph associahedron.
For a full palette of $n-1$ distinct colors, $\rKG$ consists of $(n-1)!/(n-k)!$ identical abstract polytopes.
\end{cor}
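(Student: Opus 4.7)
The plan is to derive both statements directly from Theorem~\ref{t:main}, using a straightforward counting argument together with a color-permutation symmetry. For the monochrome case, I would argue that with only one color (repeated $n-1$ times in the palette), each block of every partition $\mathcal O$ consists of identically labeled elements, so the data of a color template $(T,\mathcal O)$ is carried entirely by the underlying tubing $T$: the block sizes are forced by $T$ and the connectivities $k_t$ of its cores, and the identity of the colors within each block is trivially unique. The forgetful map $(T,\mathcal O)\mapsto T$ is then an order-isomorphism from $\rKG$ to the classical face poset of $\KG$ in Theorem~\ref{t:classic-ga}, since compatibility conditions (2) and (3) become vacuous when all colors coincide. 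This is also consistent with the count in Theorem~\ref{t:main}: there is exactly one ordering of $k-1$ identical colors, hence a single abstract polytope.

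For the full-palette case, the counting is an immediate falling-factorial computation. The number of ways to choose and order $k-1$ distinct colors out of $n-1$ is
\[
\binom{n-1}{k-1}(k-1)! \;=\; (n-1)(n-2)\cdots(n-k+1) \;=\; \frac{(n-1)!}{(n-k)!},
\]
and Theorem~\ref{t:main} produces exactly this many abstract polytopes. To prove they are all \emph{identical}, I would let the symmetric group $S_{n-1}$ act on $\rKG$ by globally permuting the color palette. Each $\sigma\in S_{n-1}$ sends $(T,\mathcal O)$ to $(T,\sigma\cdot\mathcal O)$, preserves ranks, and respects the compatibility relation because conditions (2) and (3) reference only relative positions of colors within blocks. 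This yields an action on the set of connected components that factors through the indexing set of ordered $(k-1)$-tuples of outer-block colors at the universal tube. Since any two such tuples of distinct colors differ by a permutation of the full palette, the action on components is transitive, placing every abstract polytope in a single isomorphism orbit.

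The main obstacle I anticipate is the monochrome collapse: one must carefully verify that the ordering-sensitive conditions (2) and (3) in the definition of color-template compatibility truly impose no constraint beyond the classical tubing inclusion $T'\subset T$ when all colors coincide. Once that reduction is in hand, the second half of the corollary follows mechanically from Theorem~\ref{t:main} by the counting-and-symmetry argument above, and no further structural analysis of the poset is required.
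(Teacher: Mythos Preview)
Your proposal is correct and matches the paper's treatment: the paper states Corollary~\ref{c:main} without proof, as an immediate consequence of Theorem~\ref{t:main}, and your argument supplies exactly the natural details---the collapse of $(T,\mathcal O)$ to $T$ in the monochrome case, the falling-factorial count $(n-1)!/(n-k)!$ of ordered outer-block colorings, and the transitive $S_{n-1}$-action by color relabeling to identify the components.
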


\begin{rem}
For a full palette, an $n$-path (connectivity 1) has no outer blocks, resulting in a unique colorful associahedron (Figure~\ref{f:ksg-path}).   For $n$-cycles (connectivity 2), only the color of the outer block associated to the universal tube must be fixed, resulting in $n-1$ copies of colorful cyclohedra (Figure~\ref{f:ksg-cycle}).  In   \cite{ahos2}, this was addresssed by fixing the color of the long diagonal of the centrally symmetric polygons representing the cyclohedron.
\end{rem}

\subsection{Graphs with Four Nodes}

\begin{figure}[b]
\includegraphics[width=\textwidth]{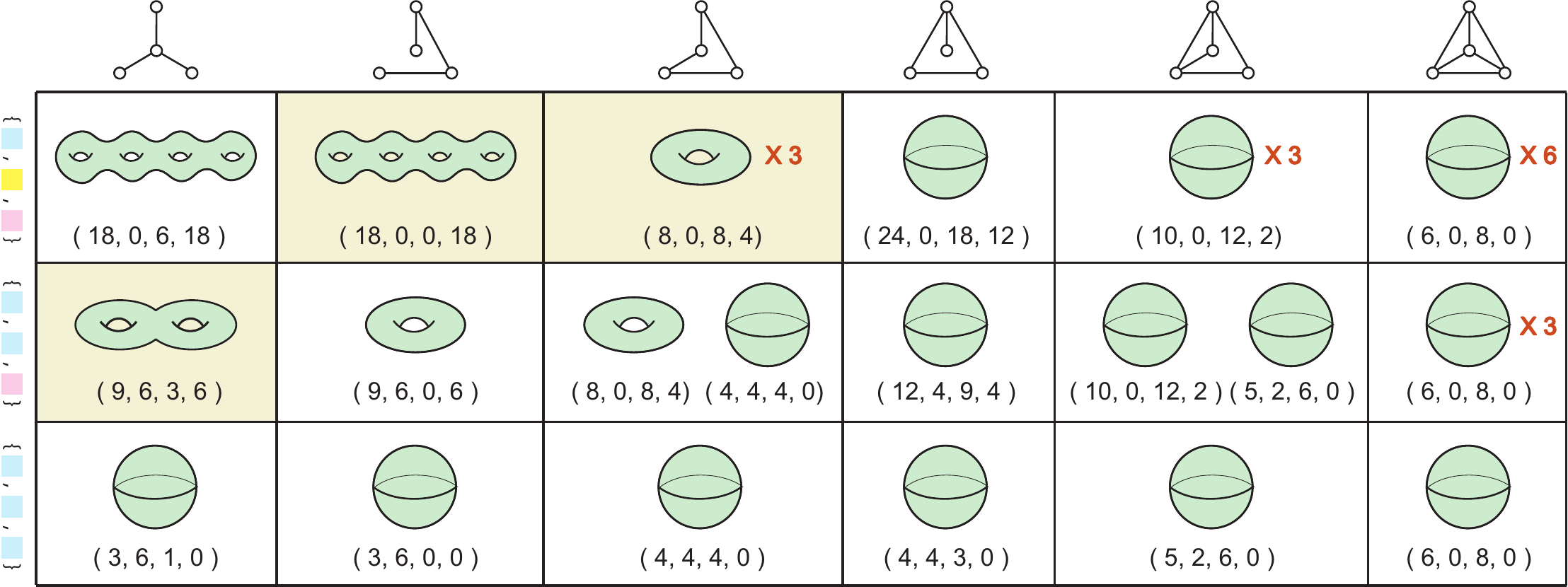}
\caption{Colorful graph associahedra for connected graphs with four nodes.}
\label{f:4nodes-con}
\end{figure}

Consider all the connected simple graphs with four nodes, as displayed by the top row of Figure~\ref{f:4nodes-con}.  The three subsequent rows show the topology of the associated colorful graph associahedra for the three types of color palettes:  full \includegraphics{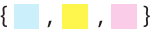}, mixed \includegraphics{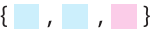}, and monochrome \includegraphics{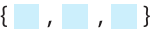}.  All these colorful graph associahedra are (collections of) handlebodies; in particular, the bottom row is simply the classical graph associahedron, topologically equivalent to a 3-ball.

Each entry also has a vector $(k_4,k_5,k_6,k_{10})$, corresponding to the number of squares, pentagons, hexagons, and decagons tiling the respective surface boundaries.  The two highlighted entries in the first row are the colorful associahedron and cyclohedron considered in \cite{ahos2}.  Note that the colorful cyclohedron has three distinct copies, each tiled by the 2-faces in $(8,0,8,4)$, corresponding to the three ways of assigning colors to the outer block.  The bottom row of Figure~\ref{f:4nodes-con} showcases the monochrome case.  In \cite{cdf}, natural cellular surjections  (with certain algebra and coalgebra homomorphims) are shown to exist between  graph associahedra as their underlying graphs are altered. It would be interesting to find analogous maps between colorful graph associahedra as their palettes are transformed.

\begin{exmp}
Consider the shaded entry in the second row of Figure~\ref{f:4nodes-con}, the trivalent graph with color palette \includegraphics{3-mixed}.  The abstract polytope is a solid genus-two surface tiled by 9 squares, 6 pentagons, 3 hexagons, and 6 dodecagons.  The color templates associated to these facets are fully outlined in Figure~\ref{f:trivalent-faces}.

\begin{figure}[h]
\includegraphics[width=.9\textwidth]{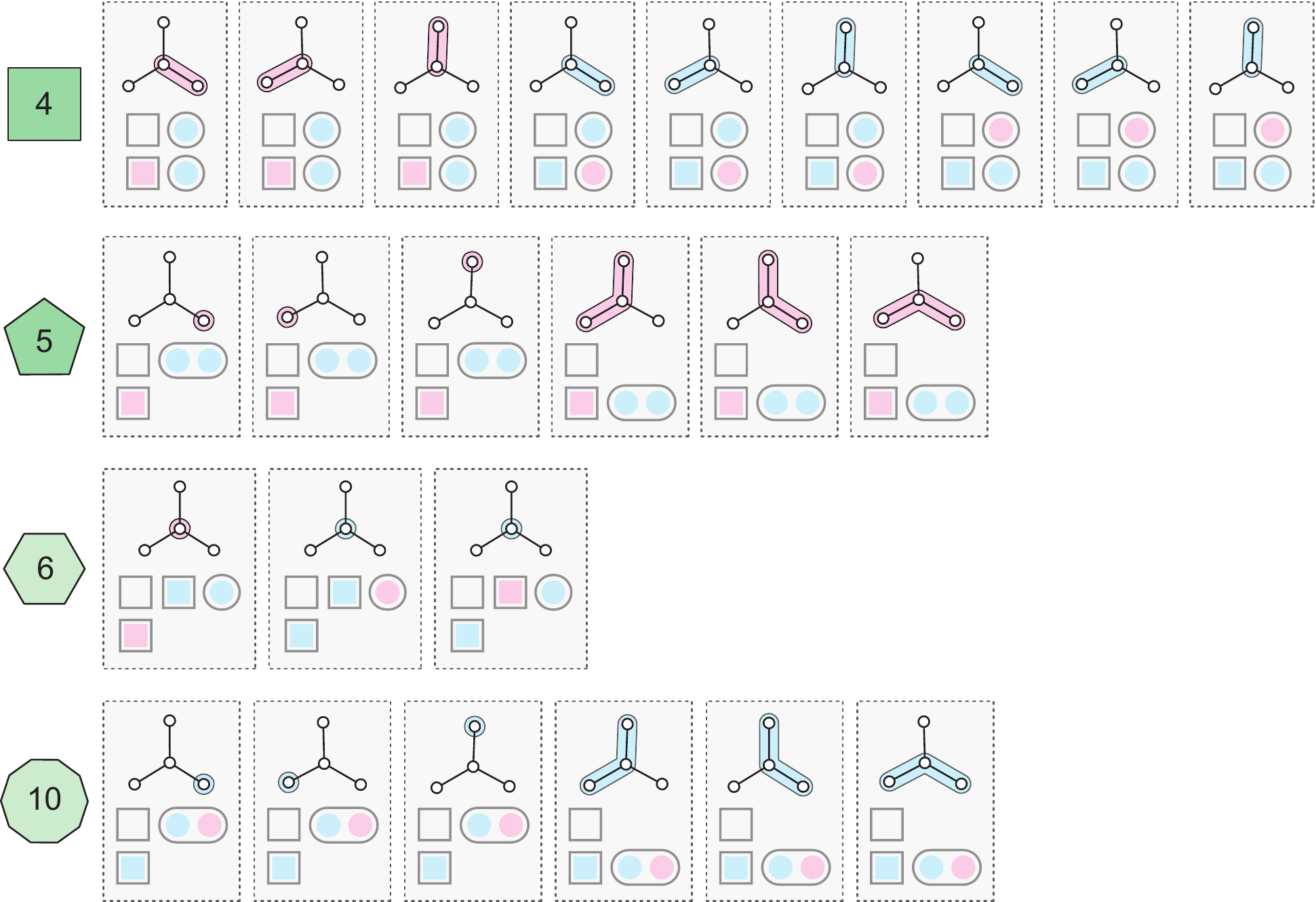}
\caption{Facets of the trivalent graph with a mixed color palette.}
\label{f:trivalent-faces}
\end{figure}

Figure~\ref{f:tiles} examines a part of this genus-two surface up-close, showcasing the gluing of a few facets from Figure~\ref{f:trivalent-faces}.  The pentagons and squares are given a darker shading to help distinguish them from the hexagons and dodecagons.
\end{exmp}

\begin{figure}[h]
\includegraphics[width=1\textwidth]{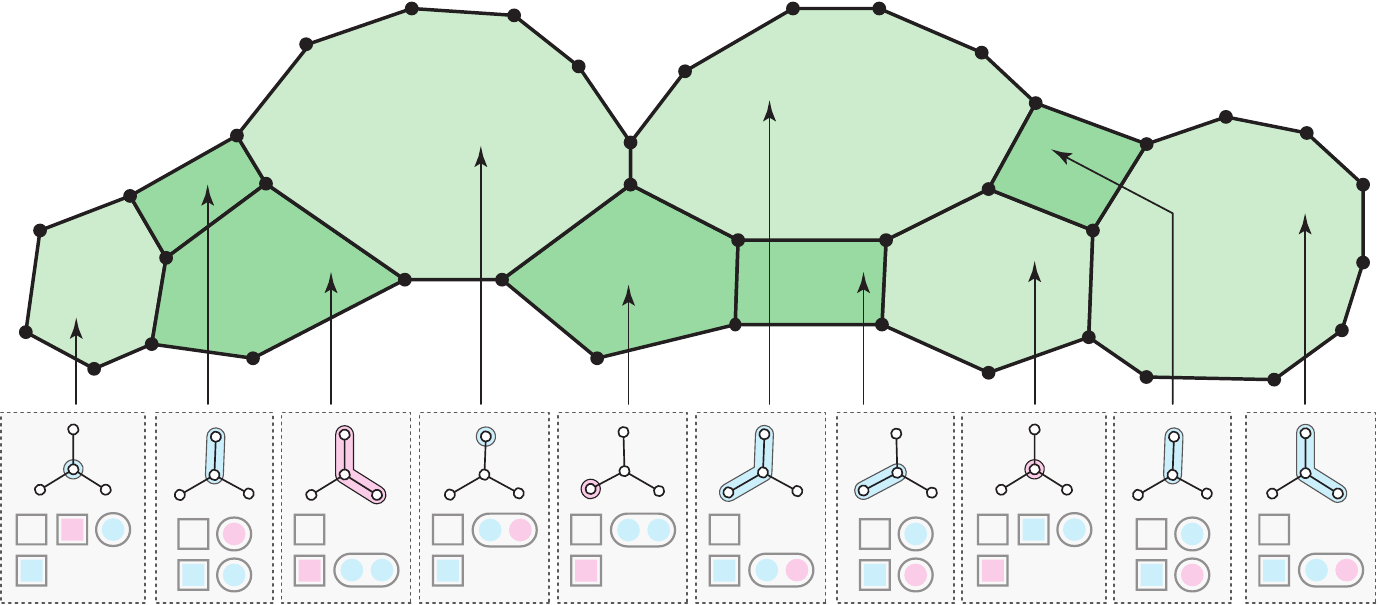}
\caption{A gluing of some facets from Figure~\ref{f:trivalent-faces}.}
\label{f:tiles}
\end{figure}

Corollary~\ref{c:main} shows that every monochrome palette yields the classical graph associahedron.  However, the moment we leave the monochrome world, abstraction sets in.  Even for a palette with just two colors, there are cases in which the colorful graph associahedron is not even a manifold.  Figure~\ref{f:nomanifold}(a) gives the example of a graph with its color palette.  Part (b) is one facet of this graph, corresponding to the color 1-tubing.  But this facet is what we have been discussing in detail, whose colorful graph associahedron is the genus-two handlebody, demonstrating that facets of part (a) are not topological balls.

\begin{figure}[h]
\includegraphics[width=.9\textwidth]{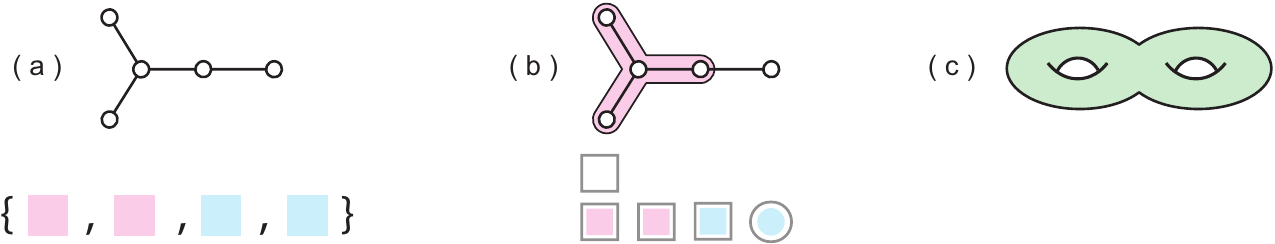}
\caption{(a) Graph with palette, along with (b) a facet and its color template, whose colorful graph associahedron is (c) a genus-two handlebody.}
\label{f:nomanifold}
\end{figure}

\subsection{Product Structures}

We turn our attention from the global nature of $\rKG$ to the local structure of its faces.

\begin{thm} \label{t:prodStruc}
Given a graph $G$, the $j$-faces of $\rKG$ correspond to the color templates of $G$ with an underlying $(n-j-1)$-tubing. Moreover, the $j$-face associated to $(T,\mathcal O)$, where $T$ is the set  $\{t_1, \dots ,t_{n-j-1}\}$ of tubes, is combinatorially equivalent to 
$$\rK {\rc {t_1} T} \times \dots \times \rK{\rc {t_{n-j-1}} T} \times \rK{\rc {G} T}\,.$$
\end{thm}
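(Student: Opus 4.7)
The plan is to view the $j$-face of $\rKG$ corresponding to $(T, \mathcal O)$ as the interval of color templates refining $(T, \mathcal O)$ in the poset, and to exhibit a rank-preserving poset isomorphism from this interval to the product. The bijection between $j$-faces and color templates with $(n-j-1)$-tubings follows from the rank count: moving down by one rank adds one tube, and the unique minimal faces below $(T, \mathcal O)$ correspond to maximal tubings of $G$, which contain $n-1$ tubes.

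For the product decomposition, I would first decompose the underlying tubings. Given any refinement $T' \supset T$, Proposition~\ref{p:core} shows that each tube $h \in T' \setminus T$ sits inside a unique smallest super-tube $t_* \in T \cup \{G\}$ and corresponds bijectively to a tube of the core graph $\rc{t_*}{T}$. Thus $T' \setminus T$ partitions uniquely into tubings $T'_1, \ldots, T'_{n-j-1}$ and $T'_G$, with $T'_i$ a tubing of $\rc{t_i}{T}$ and $T'_G$ a tubing of $\rc{G}{T}$. Next, I would decompose the color data: the block assigned to each $t_i$ in $\mathcal O$ consists of one first-block color plus outer-block and inner-block colors, totaling $|\rc{t_i}{T}|$ colors by Proposition~\ref{p:maxtubing}; removing the fixed color of $t_i$ itself leaves a palette of $|\rc{t_i}{T}| - 1$ colors for $\rK{\rc{t_i}{T}}$. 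Likewise the universal tube is assigned a palette of $|\rc{G}{T}| - 1$ colors for $\rK{\rc{G}{T}}$, and the sizes sum to $n - 1$ because every node of $G$ belongs to exactly one of the cores.

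The heart of the proof, and the expected main obstacle, is to verify that the intricate compatibility axioms (1)--(3) for color templates decouple across the cores. By the remark preceding the compatibility definition, it suffices to treat a single-tube addition. Such a tube $t$ lies inside a unique smallest super-tube $t_* \in T \cup \{G\}$, and all three compatibility conditions reference only $t$, $t_*$, and colors in the block assigned to $t_*$: condition (1) fixes every other block, condition (2) constrains the initial colors of $t_*$'s block, and condition (3) assigns the complementary colors to $t$ while regulating how inner-block freedom compensates for any change in connectivity of $\rc{t_*}{T}$ versus $\rc{t_*}{T \cup \{t\}}$. Hence the single-tube compatibility relation in $\rKG$ across the cover $T \prec T \cup \{t\}$ is precisely the single-tube compatibility relation in $\rK{\rc{t_*}{T}}$ across the corresponding cover, using the induced sub-palette, while templates on the other cores are untouched.

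Chaining single-tube additions via iterated compatibility extends the correspondence to arbitrary refinements, and order preservation is immediate because each step acts on exactly one core. Consequently the interval below $(T, \mathcal O)$ in $\rKG$ is poset-isomorphic to the product $\rK{\rc{t_1}{T}} \times \cdots \times \rK{\rc{t_{n-j-1}}{T}} \times \rK{\rc{G}{T}}$, which is the claimed combinatorial equivalence of $j$-faces.
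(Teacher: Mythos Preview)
Your proposal is correct and follows essentially the same approach as the paper: both use Proposition~\ref{p:core} to split refinements of $T$ across the core graphs, then observe that the color blocks attached to each $t_i$ in $\mathcal O$ serve exactly as the data attached to the universal tube of $\rc{t_i}{T}$, yielding the product map. Your treatment is in fact more detailed than the paper's, which defines the map $(T,\mathcal O)\mapsto((\rc{t_1}{T},o_1),\dots,(\rc{G}{T},o))$ and then declares the order-isomorphism ``straight-forward to verify''; your explicit reduction to single-tube compatibility and the check that conditions (1)--(3) only reference $t$, $t_*$, and the block of $t_*$ is precisely the verification the paper omits.
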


\begin{proof}
The proof is akin to that of the product structure of facets in $\KG$ presented in \cite{cd}. As per Proposition~\ref{p:core}, there is a natural bijection from each tube $t_i$ of $G$ to the universal tube of $\rc {t_i} T$.  Moreover, the structure of the color blocks associated to $t_i$ in $T$ is the same as the structure of those associated to the universal tube of $\rc {t_i} T$. This gives rise to the following mapping: For a color template $(T,\mathcal O)\in \rKG$,  where $T$ is the set $\{t_1, t_2, \cdots ,t_{n-j-1}\}$ of tubes and $o_i$ is the collection of color blocks associated to $t_i$, define 
$$(T,\mathcal O) \ \mapsto \ ((\rc {t_1} T, o_1), \dots, (\rc {t_{n-j-1}} T,o_{n-j-1}), (\rc {G} T,o))\,.$$
It is straight-forward to verify that this is a bijection.
\end{proof}

This theorem allows us to look at any $j$-face and, based on its color template, completely determine its structure. However, since the cores are dependent on $G$, the structure of each $j$-face is a globally dependent property.

\begin{exmp}
Figure~\ref{f:product}(a) depicts a color template with an underlying 2-tubing. The core of the pink tube is a path with two nodes, which gives rise to a line segment as its colorful graph associahedron. The core of the blue tube is a path with three nodes, which (with the monochrome pink palette) yields a pentagon.  Since the universal tube is simply a node, the face of the given 2-tubing is a pentagonal prism, shown in (b), as  Theorem~\ref{t:prodStruc} claims. Part (c) lists the seven tubings associated to each face of the prism.
\end{exmp}

\begin{figure}[h]
\includegraphics[width=\textwidth]{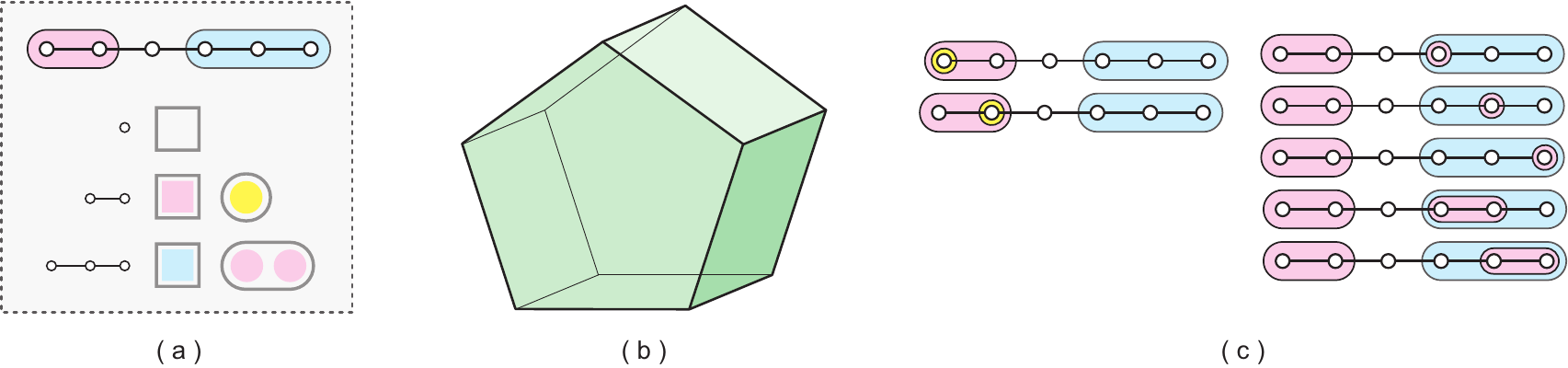}
\caption{A face with a product structure.}
\label{f:product}
\end{figure}

%
%
\section{Disconnected Graphs}  \label{s:disconnect}
\subsection{Modifications}

We extend the definitions and results for colorful graph associahedra to disconnected graphs.  Let $G$ be a graph with connected components $G_1, \dots, G_m$.   As for the classic graph associahedra  \cite{cdf}, any tubing of $G$ cannot contain all of the tubes $\{G_1, \dots, G_m\}$.  Thus, for a graph with $n$ nodes, a maximal tubing  still contains exactly $n-1$ tubes.   Based on this, we make a few minor alterations to existing definitions:

\begin{enumerate}
\item {\bf Universal Modification:}
The universal tubes are now the connected components of $G$ (and no longer the entire graph). As before, the universal tubes are included in the color template, but now they can also take on a color. 
\item {\bf Template Modification:}
A color template is the pair $(T,\mathcal O)$ as before, but now $\mathcal O$ is a partition of the color palette amongst the tubes of $T$ \emph{and} the universal tubes $G_i$. Here, an additional \emph{universal block} is created to store the colors that will be used on the universal tubes. 

\item {\bf Compatibility Modification:}
Compatibility of $(T, \mathcal O) \prec (T', \mathcal O')$ is as before, with the following addendum:  For a universal tube $G_i$ of $G$, with tubings $T'$ and $T = T' \cup \{G_i\}$,
\begin{enumerate}
\item In $\mathcal O$, the ordered partitions for all tubes in $T \setminus \{G_i\}$ must be the same as in $\mathcal O'$.
\item In $\mathcal O$, the inner and outer blocks of $G_i$ are colored as dictated by $\mathcal O'$, but the color of $G_i$ is selected from the universal block of $\mathcal O'$.
\end{enumerate}
\end{enumerate}

\begin{exmp}
Consider Figure~\ref{f:dis-compact}, the example of a disconnected graph with three components: a 4-cycle, a 3-path, and a node. As with connected graphs, the color template in (a) consists of a tubing and partition of the color palette. Each component has color blocks assigned according to its connectivity. We now have  the addition of a universal block, designating colors (pink, yellow) that can be used for the universal tubes. As universal tubes are added, their color will be taken from this block. 

\begin{figure}[h]
\includegraphics[width=\textwidth]{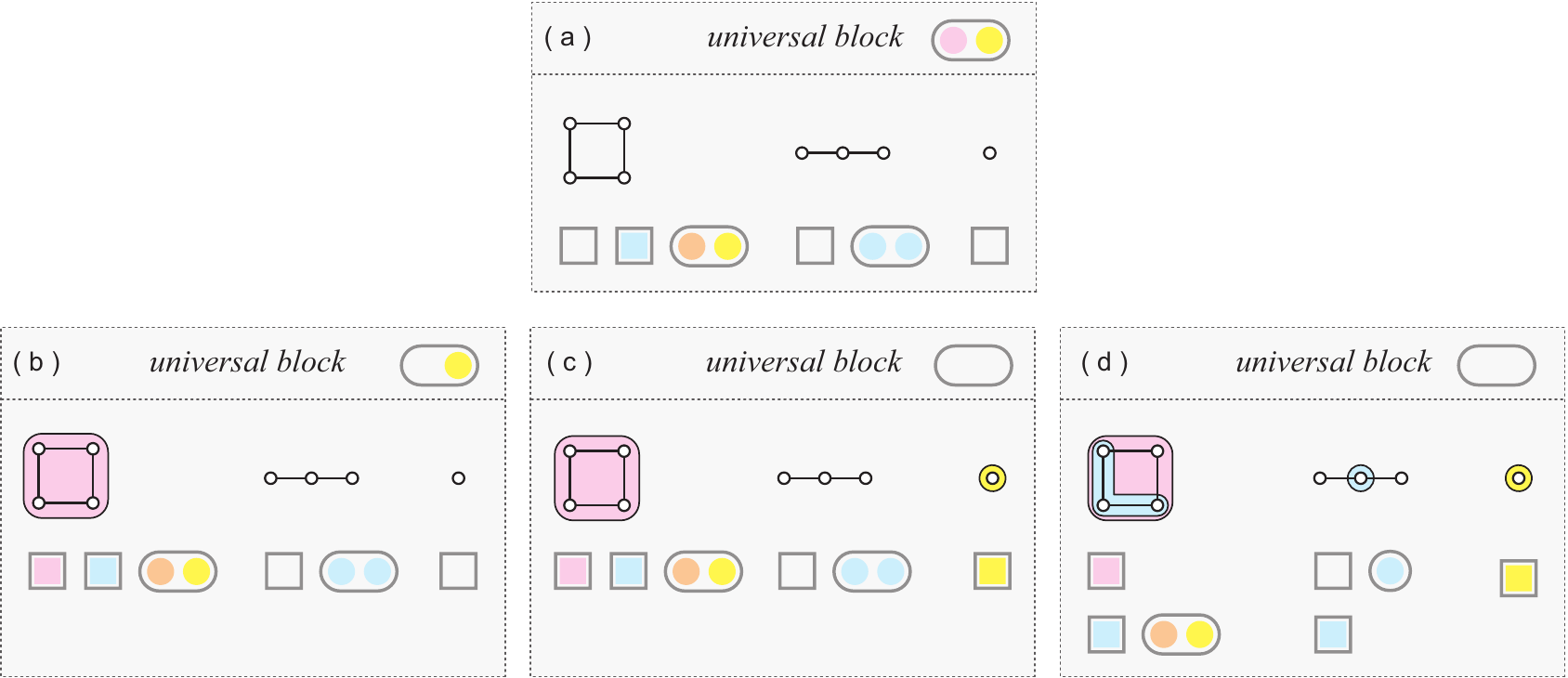}
\caption{Compatibility of disconnected colored graphs.}
\label{f:dis-compact}
\end{figure}

The second row in Figure~\ref{f:dis-compact} represents compatible color tubings achieved through the iterative addition of tubes to (a). A universal tube is added in (b), given an available color (pink) from the the universal block. The colors for its inner and outer blocks remain those designated by the template in (a). Another universal tube is added in (c), using the only available color (yellow) in the universal block. At this point, no colors remain in the universal block, and likewise, the maximal number of universal tubes has been reached.  Non-universal tubes may still added, as in (d), whose color blocks are determined according to  normal compatibility rules.
\end{exmp}

Consider the simplest example of a disconnected graph $G[n]$, the null graph on $n$ nodes.  Proof of the following is provided in Section~\ref{s:proofs}:

\begin{prop} \label{p:null}
Given $G[n]$ with a color palette, its colorful graph associahedron, denoted as $\rKGn n$, is a connected simple abstract polytope. 
\end{prop}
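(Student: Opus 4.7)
The plan is to give an explicit combinatorial description of $\rKGn n$ and then verify the axioms of a simple abstract polytope, with strong flag-connectivity handled inductively via the product structure of Theorem~\ref{t:prodStruc}.

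Since $G[n]$ has no edges, its tubes are exactly the $n$ single-node subgraphs, which are also its universal tubes. For any tube $t$, the core $\rc{t}{T}$ is a single isolated node and carries no inner or outer blocks, so each activated tube bears only a single color block drawn from the universal block. A color template of $G[n]$ is thus specified by a subset $S\subseteq\{1,\dots,n\}$ with $|S|\leq n-1$ together with an injection $\sigma\colon S\hookrightarrow\{c_1,\dots,c_{n-1}\}$, ordered by $(S,\sigma)\prec(S',\sigma')$ iff $S\supseteq S'$ and $\sigma|_{S'}=\sigma'$. Equivalently, $\rKGn n$ is the poset of partial matchings of $K_{n,n-1}$ ordered by reverse inclusion, augmented with a least face $\hat 0$. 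From this model most axioms are straightforward: the greatest face is the empty template, the least is $\hat 0$, every flag is a maximal chain of partial matchings and so has exactly $n+1$ faces (so the rank is $n-1$), each vertex (maximum matching) lies in exactly $n-1$ facets (simplicity), and the diamond property reduces to the observation that in any Boolean interval $[M_0,M_1]$ with $|M_1\setminus M_0|=2$ there are precisely two intermediate matchings. The boundary cases are handled by noting that every $(n{-}2)$-matching extends to a maximum matching in exactly two ways (two uncovered nodes competing for the unique remaining color), and every size-$1$ matching has exactly two sub-matchings.

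The nontrivial axiom is strong flag-connectivity, which I would prove by induction on $n$. Theorem~\ref{t:prodStruc} supplies, for any template $(T,\mathcal O)$ of cardinality $s\ge 1$, an isomorphism of the section $[\hat 0,(T,\mathcal O)]$ with $\rKGn{n-s}$ (extensions color the uncovered nodes with the remaining palette) and of $[(T,\mathcal O),\hat 1]$ with the face lattice of the $(s-1)$-simplex (choosing which tubes of $T$ to retain). Given two flags $\Phi,\Psi$ of $\rKGn n$ with common intersection $F$: if $F$ contains some non-trivial face $f^\ast$, decompose each flag into independent subflags of $[\hat 0,f^\ast]$ and of $[f^\ast,\hat 1]$, apply strong flag-connectivity in each (by the inductive hypothesis for the lower section, trivially for the simplicial upper section), and recombine. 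If $F=\{\hat 0,\hat 1\}$, plain flag-connectivity suffices; represent each flag by the ordered sequence $(a_1,\dots,a_{n-1})$ of edges added in descending from $\hat 1$ to the vertex, so that adjacent flags correspond either to transpositions $a_j\leftrightarrow a_{j+1}$ (swapping $f_{n-1-j}$) or to replacing $a_{n-1}$ by the unique alternative completion of $\{a_1,\dots,a_{n-2}\}$ (swapping $f_0$). The latter moves realize the Cayley graph of $S_n$ with the $n-1$ transpositions between the ``unmatched'' slot and the colored slots as generators, which is connected since these transpositions generate $S_n$; combined with reorderings via adjacent transpositions, this connects any two flags and also shows $\rKGn n$ is a single polytope rather than a disjoint collection.

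The principal obstacle will be the bookkeeping in the inductive step: verifying that adjacent-flag moves performed inside a subsection $[\hat 0,f^\ast]$ or $[f^\ast,\hat 1]$ lift faithfully to adjacent-flag moves of $\rKGn n$ preserving every face of $F$. This is exactly where Theorem~\ref{t:prodStruc} does the work, since a flag of $\rKGn n$ through $f^\ast$ factors into two independent subflags, so an adjacency inside one subsection keeps the other subsection's faces (in particular the faces of $F$ lying there) fixed.
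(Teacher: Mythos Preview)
Your argument is essentially correct but takes a very different route from the paper. The paper does not re-verify the abstract-polytope axioms for $G[n]$; it simply observes that the exchange-graph machinery of Section~\ref{s:proofs} (Propositions~\ref{p:diamond} and~\ref{p:strongflagconn}) applies verbatim once the exchange graph $\eG$ of $G[n]$ is shown to be connected, and then dispatches connectivity in three lines: to swap the colors on nodes $v_i$ and $v_j$, move the tube on $v_i$ to the untubed node $v_n$, then the tube on $v_j$ to $v_i$, then the tube on $v_n$ to $v_j$. Your approach instead builds an explicit partial-matching model and checks all four axioms from scratch, with an inductive proof of strong flag-connectivity via sections. This is more self-contained and yields a pleasant combinatorial picture of $\rKGn n$, but it duplicates the work already packaged in the proof of Theorem~\ref{t:main}.

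Two minor points to tighten. First, Theorem~\ref{t:prodStruc} as stated presumes $G$ connected (there is a single universal core $\rc{G}{T}$), so it does not literally apply to $G[n]$; you should verify $[\hat 0,f^\ast]\cong\rKGn{n-s}$ and $[f^\ast,\hat 1]\cong$ (face lattice of $\Delta_{s-1}$) directly from your matching description rather than by citation, which is immediate. Second, your model of templates as injections $\sigma\colon S\hookrightarrow\{c_1,\dots,c_{n-1}\}$ is only literally correct for a full palette; when the palette has repeated colors, distinct palette positions carrying the same color give the same template, so you must pass to the corresponding quotient and confirm that the diamond and flag-connectivity arguments survive (they do, but it merits a sentence).
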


\begin{exmp}
Figure~\ref{f:torus} displays the net of the boundary  of $\rKGn 4$, a solid torus tiled by hexagons.  Each hexagon is a copy of $\rKGn 3$  corresponding to a color tube about one of its four nodes. The color template for a hexagon and one of its vertices is provided.  In general, each facet of $\rKGn n$ corresponds to a unique color and node combination, and  is adjacent to precisely those facets that correspond to both a different node and color.
\end{exmp}

\begin{figure}[h]
\includegraphics[width=\textwidth]{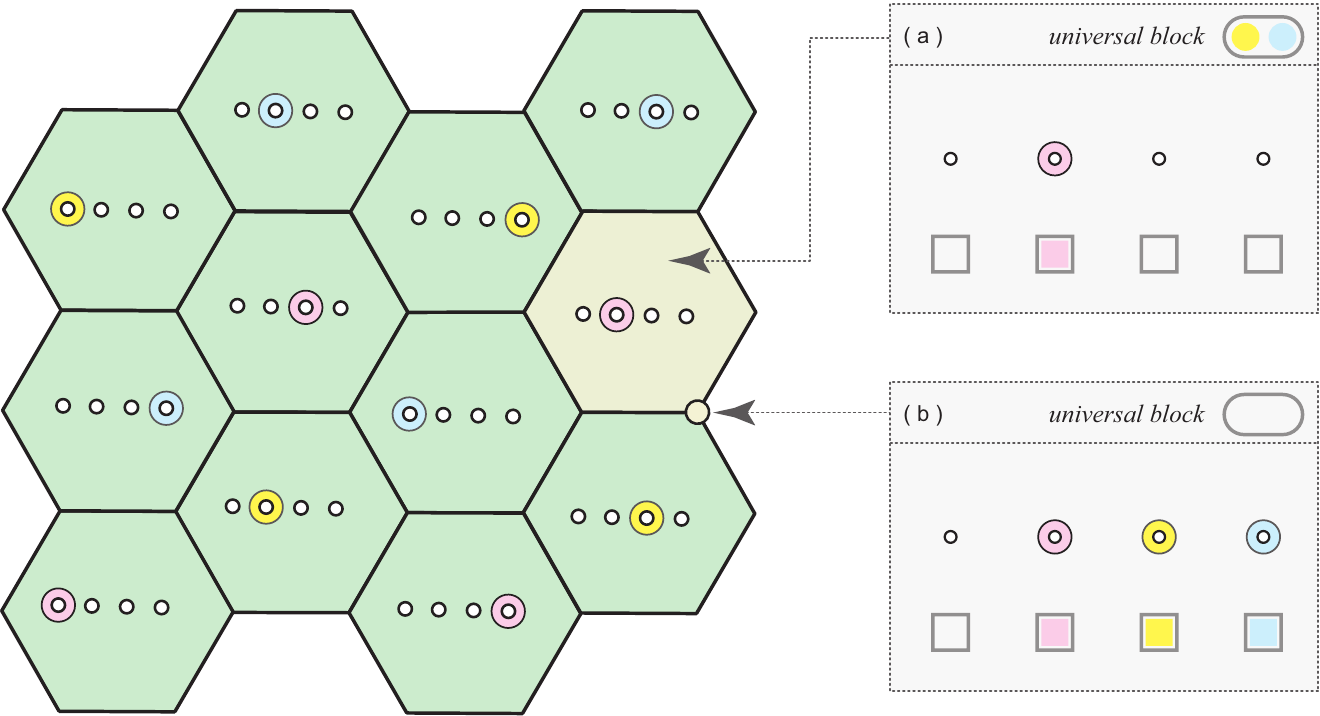}
\caption{The colorful graph associahedron $\rKGn 4$.}
\label{f:torus}
\end{figure}

\subsection{Structures}
We turn our attention to general disconnected graphs. In particular, we show that the colorful graph associahedron has a product structure given by its connected components.

\begin{thm}  \label{t:discon}
Let $G$ be a simple graph with connected components $G_1, \dots, G_m$, along with a color palette. The colorful graph associahedron $\rKG$ is a collection of simple abstract polytopes. Moreover, each ordered partition of the palette into sets of sizes $|G_1|-1, \dots, |G_m|-1, m-1$ corresponds to a unique subset of abstract polytopes of $\rKG$ given by 
\begin{equation} \label{e:product}
\rKG_1  \times \cdots \times \rKG_m \times \rKGn m\,,
\end{equation}
where the palette for $\rKG_i$ is given by the partition class of size $|G_i|-1$ and the palette for $\rKGn m$ is given by the partition class of size $m-1$.
\end{thm}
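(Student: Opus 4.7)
The plan is to construct a natural poset isomorphism between the subposet of $\rKG$ indexed by a given ordered partition of the palette and the product poset in~(\ref{e:product}), then invoke Theorem~\ref{t:main} and Proposition~\ref{p:null} factor by factor. Since every tube of $G$ is a connected subgraph, it sits inside a unique component $G_i$; hence any tubing $T$ of $G$ decomposes uniquely as $T = T_1 \sqcup \cdots \sqcup T_m \sqcup U$, where each $T_i$ is a non-universal tubing of $G_i$ and $U \subseteq \{G_1,\dots,G_m\}$ is the set of universal tubes of $G$ included in $T$. The disconnected-tubing constraint forces $|U| \le m-1$, and identifying the nodes of the null graph $G[m]$ with $\{G_1,\dots,G_m\}$ turns $U$ into precisely a tubing of $G[m]$.

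Fix an ordered partition of the palette into classes $P_1,\dots,P_m,P_0$ of sizes $|G_1|-1,\dots,|G_m|-1,m-1$. I would show the color data $\mathcal O$ splits as $(\mathcal O_1,\dots,\mathcal O_m,\mathcal O_0)$, where each $\mathcal O_i$ is a color-template partition of $(T_i,\cdot)$ in $\rKG_i$ using $P_i$, and $\mathcal O_0$ is a color-template partition of $(U,\cdot)$ in $\rKGn m$ using $P_0$. Two features of the Template Modification drive this decomposition. First, for each $i$ the inner and outer blocks attached to the universal tube $G_i$ in $\rKG$ depend only on the core $\rc{G_i}{T}$, which is internal to $G_i$; hence the blocks concentrated on $G_i$ form a legitimate connected-case color template for $G_i$ in which the universal tube of $G_i$ carries only inner and outer blocks, exactly as $\rKG_i$ demands. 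Second, the universal block of $\mathcal O$ holds the $m-1$ colors destined for universal tubes of $G$; under the identification, the color drawn for a universal tube $G_i \in U$ corresponds to the color on the singleton tube $\{i\}$ of $G[m]$, and the core of any such singleton is a single node, so no nontrivial inner or outer blocks are needed.

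To verify the bijection is order-preserving, it suffices to check covers, since the general case follows by iteration. Adding a non-universal tube $t \subset G_i$ to $T$ affects only the $i$-th factor, and the clauses (1)--(3) of compatibility restrict verbatim to the connected-case compatibility inside $\rKG_i$. Adding a universal tube $G_i$ to $T$ affects only the $G[m]$ factor: clause (3a) of the Compatibility Modification freezes $\mathcal O_j$ for $j \ne i$, while (3b) draws the color of $G_i$ from the universal block, matching the cover relation in $\rKGn m$ obtained by adding the colored singleton $\{i\}$. Combined with Theorem~\ref{t:main} applied to each $\rKG_i$, Proposition~\ref{p:null} applied to $\rKGn m$, and the routine fact that a Cartesian product of simple abstract polytopes is a simple abstract polytope (the diamond condition and strong flag-connectivity factor through), this establishes the theorem. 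The chief obstacle will be the careful bookkeeping of colors flowing through the universal block under iterated refinements, and the verification that the Universal Modification of compatibility coincides, under the bijection, with ordinary disconnected-case compatibility for singleton (hence universal) tubes inside $\rKGn m$.
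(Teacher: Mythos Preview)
Your proposal is correct and follows essentially the same route as the paper: both construct the natural bijection between color templates of $G$ and tuples of color templates on the $G_i$ together with a template on $G[m]$ (you decompose, the paper assembles, but these are inverse maps), verify it is an order isomorphism, and then appeal to the fact that a product of simple abstract polytopes is a simple abstract polytope. The only cosmetic difference is that the paper cites \cite[Theorem~A]{gh} for the product step, whereas you sketch why the diamond condition and strong flag-connectivity factor through; and you are somewhat more explicit than the paper in checking compatibility at the level of covers.
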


\begin{proof}
We create an order-embedding bijection $\phi$ from the faces of the form in \eqref{e:product}, satisfying the conditions outlined above, to the faces of $\rKG$. Given a face described by the ordered tuple 
$$((T_1, \mathcal{O}_1), \ \dots,  \ (T_{m+1}, \mathcal{O}_{m+1}))\,,$$
define $\phi$ as follows: The first $m$ arguments follow the natural mapping: for each face $(T_i,\mathcal{O}_i)$ of $\rKG_i$, the tubing $T_i$ is constructed on component $G_i$ in $G$. The color order $\mathcal{O}_i$ is not changed and is simply associated to the corresponding tubes on $G$. 

The final argument uses a different mapping: Given the face $(T_{m+1}, \mathcal{O}_{m+1})$ of $\rKGn m$, a tube is constructed around $G_i$ if and only if $T_{m+1}$ includes a tube around the $i$-th node of $G[m]$. As before, the color order $\mathcal{O}_{m+1}$ is not changed and is simply associated to the corresponding tubes on $G$.  This holds even for the universal block. Note that $\phi$ is an order-embedding bijection and maintains the validity of the tubings. It is straight-forward to verify this is an order isomorphism.
By \cite[Theorem A]{gh}, since each term in \eqref{e:product} is a collection of abstract polytopes, $\rKG$ is a collection of abstract polytopes. Simplicity follows immediately.
\end{proof}

\begin{cor} \label{c:discon}
Let $\Delta_i$ be the $i$-simplex. For a monochrome color palette, $\rKG$ is isomorphic to
$$\KG_1 \times \dots \times \KG_m \times \Delta_{m-1}\,.$$
For a full palette of $n-1$ distinct colors, $\rKG$ is isomorphic to
$$\rKG_1 \times \dots \times \rKG_m \times \rKGn m\,,$$
consisting of
$${n-1 \choose m-1} \cdot {n-m \choose |G_1| -1} \cdot {n-m - |G_1| +1 \choose |G_2| -1} \ \cdots \ {|G_m| -1 \choose |G_m|-1}$$ 
copies of identical polytopes.
\end{cor}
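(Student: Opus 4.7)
The plan is to derive both assertions as direct consequences of Theorem~\ref{t:discon}, once the two extreme palette cases are analyzed factor by factor. Theorem~\ref{t:discon} already supplies the product decomposition $\rKG_1 \times \cdots \times \rKG_m \times \rKGn m$, one collection per ordered partition of the palette into classes of sizes $|G_1|-1, \ldots, |G_m|-1$, and $m-1$. Each case of the corollary amounts to identifying these factors combinatorially and counting the admissible partitions.

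For the monochrome claim, every ordering of identical colors yields the same partition, so Theorem~\ref{t:discon} collapses to a single product. The monochrome half of Corollary~\ref{c:main} replaces each $\rKG_i$ by the classical associahedron $\KG_i$. The remaining task is to identify monochrome $\rKGn m$ with the simplex $\Delta_{m-1}$. I would do this via an explicit poset isomorphism: a tubing of $G[m]$ is necessarily a proper subset $T \subset \{G_1, \ldots, G_m\}$ of universal tubes, all of which are automatically pairwise compatible since $G[m]$ has no edges. Sending $T$ to its complement $\{G_1, \ldots, G_m\} \setminus T$ yields a non-empty subset of size $m - |T|$, which is a face of $\Delta_{m-1}$. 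An $(m - j - 1)$-tubing maps to a $(j+1)$-subset, so rank is preserved; and complementation reverses inclusion, so $T \prec T'$ in $\rKGn m$ (meaning $T \supseteq T'$) corresponds to face inclusion in the simplex. Rank, incidence, and the diamond property all transport immediately under this bijection.

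For the full-palette claim, each of the $n-1$ distinct colors can be assigned to a unique class in an ordered partition of sizes $|G_1|-1, \ldots, |G_m|-1, m-1$. By Theorem~\ref{t:discon}, each such ordered partition produces one combinatorial copy of $\rKG_1 \times \cdots \times \rKG_m \times \rKGn m$; distinct ordered partitions yield combinatorially identical copies, since relabeling distinct colors induces a poset isomorphism between the associated products. The number of such ordered partitions is the multinomial coefficient
$$\frac{(n-1)!}{(m-1)!\,(|G_1|-1)!\,(|G_2|-1)!\cdots(|G_m|-1)!},$$
which I would expand as the iterated selection: first pick $m-1$ colors for the universal block, then $|G_1|-1$ of the remaining $n-m$ colors, then $|G_2|-1$ of those remaining after that, and so on. This is exactly the telescoping product of binomial coefficients in the statement.

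I do not expect any substantive obstacle. The only step requiring verification is the monochrome identification $\rKGn m \cong \Delta_{m-1}$; the key observation making this transparent is that in $G[m]$ every pair of single-node universal tubes is automatically compatible, so the face poset of monochrome $\rKGn m$ reduces, after complementation, to the face poset of $\Delta_{m-1}$.
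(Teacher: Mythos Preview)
Your proposal is correct and follows essentially the same route as the paper: both derive the corollary from Theorem~\ref{t:discon} by identifying the factors in each extreme palette case and counting partitions for the full palette. The paper's own proof is terser---it simply cites \cite[Theorem~2]{cdf} for the monochrome decomposition rather than rederiving $\rKGn m \cong \Delta_{m-1}$ via complementation as you do---but the substance is the same.
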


\begin{proof}
The monochrome results follows from the classical graph associahedron \cite[Theorem 2]{cdf}. For the full palette, the enumeration is obtained from the appropriate partitions of the colors amongst the connected components.
\end{proof}

\begin{rem}
It follows that for a multi-component graph $G$, the colorful graph associahedron $\rKG$ is connected only when the color palette is monochrome or when $G$ is the null graph.
\end{rem}

\begin{figure}[b]
\includegraphics[width=\textwidth]{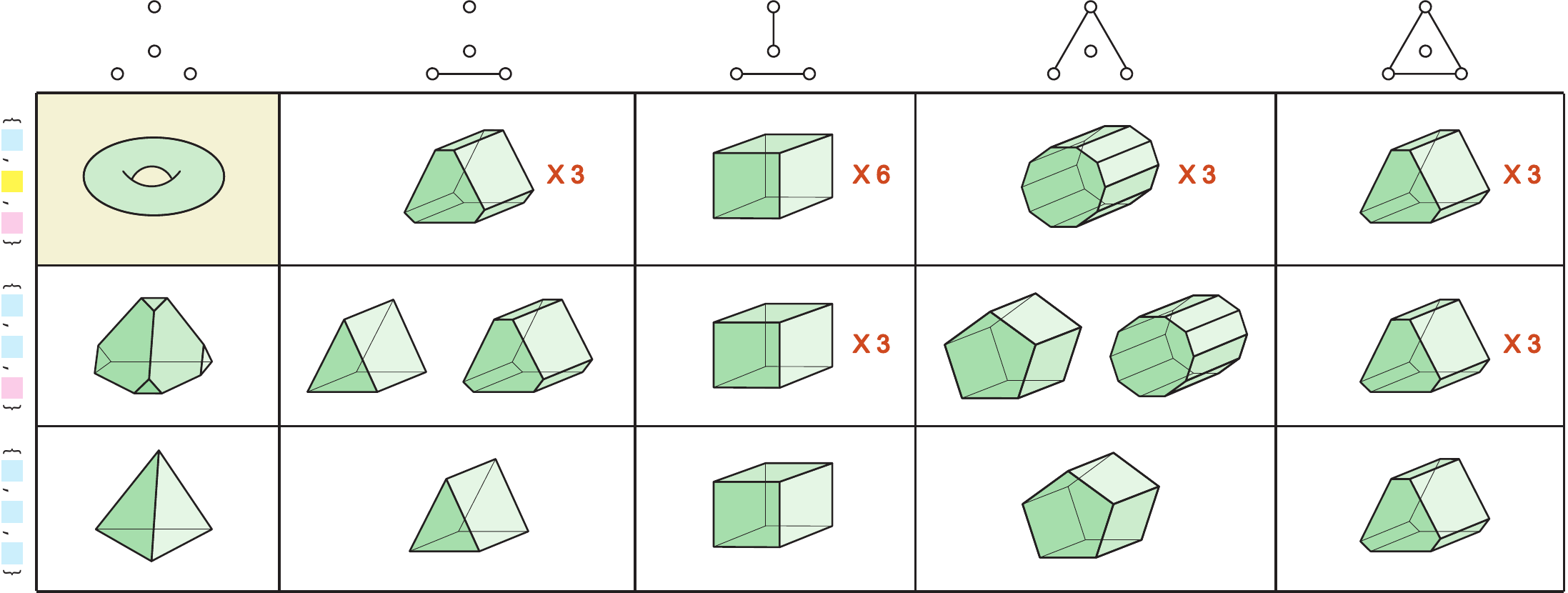}
\caption{Colorful graph associahedra for disconnected graphs with four nodes.}
\label{f:4nodes-discon}
\end{figure}

\begin{exmp}
Figure~\ref{f:4nodes-discon} depicts the colorful graph associahedra for disconnected graphs with four nodes. Similar to Figure~\ref{f:4nodes-con}, the three subsequent rows show the geometry of colorful graph associahedra associated to these graphs, for the three types of color palettes:  full \includegraphics{3-full}, mixed \includegraphics{3-mixed}, and monochrome \includegraphics{3-mono}. The number of connected components decreases monotonically moving down the rows: as distinct colors are removes, the number of ways to partition the palette amongst the components and universal block decreases.

While the shaded entry of Figure~\ref{f:4nodes-discon} is a solid torus (detailed in Figure~\ref{f:torus}), the rest are convex polytopes. Convexity is common here because any graph with two components contains the interval $\rKGn 2$ in its product. But as the number of nodes and connected components of $G$ increases, the polytope structure of $\rKG$  will move from convex to abstract.
\end{exmp}

\subsection{Regularity}

For traditional polytopes, regularity requires that the symmetry group be flag-transitive. However, abstract polytopes do not necessarily have a symmetry group and a geometry. By weakening the notion of regularity and only requiring flag-transitivity of the automorphism group, the definition of regularity can be extended to abstract polytopes \cite{ms}. For the remainder of this section, we consider only colorful graph associahedra with \emph{full} palettes.

\begin{defn}
Let $P$ be an abstract polytope. Then, $P$ is \emph{(combinatorially) regular} if its automorphism group $\Gamma(P)$ is flag-transitive. Equivalently, for any two flags $\Phi$ and $\Psi$, there exists an element $\sigma \in \Gamma(P)$ such that $\sigma (\Phi) = \Psi$.
\end{defn}

Given the high degree of symmetry, it is not surprising that $\rKGn n$ is regular. In fact, it mostly turns out to be the only example. Let $P_i$ be the path and $K_i$ be the complete graph on $i$ nodes.

\begin{lem} \label{l:connregular}
Let $G$ be a simple connected graph, along with a full color palette. If $\rKG$ is regular, then $G$ is either  $K_1$, $K_2$, $K_3$, or $P_3$.
\end{lem}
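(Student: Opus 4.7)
The plan is to argue by induction on $n = |V(G)|$. For $n \leq 3$ every connected simple graph already belongs to $\{K_1, K_2, K_3, P_3\}$, so there is nothing to show. For $n \geq 4$ I will exhibit a non-regular facet of $\rKG$, invoking the standard principle that every face of a regular abstract polytope is itself regular: if $\sigma \in \Gamma(P)$ carries one flag through a face $F$ to another, then $\sigma$ fixes $F$ and restricts to an automorphism of $F$, so flag-transitivity on $P$ descends to flag-transitivity on $F$. Hence producing a single non-regular facet of $\rKG$ is enough.

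For any node $v \in V(G)$, the color $1$-tubing $(\{\{v\}\}, \mathcal O)$ determines a facet which, by Theorem~\ref{t:prodStruc}, is combinatorially isomorphic to $\rK{H_v}$, where $H_v = \rc{G}{\{\{v\}\}}$ has $n-1$ nodes and inherits a full palette of $n-2$ distinct colors (one color of the original palette is spent on the tube $\{v\}$). Before invoking induction one must check that $H_v$ is connected: the core construction applied to a single-node tube adds an edge between every pair of neighbors of $v$, so even when $G - v$ is disconnected, each component of $G - v$ contains at least one neighbor of $v$ (by connectivity of $G$), and the clique imposed on $N(v)$ reconnects these components in $H_v$. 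For $n \geq 5$, this $H_v$ is connected, has $n-1 \geq 4$ nodes, and therefore does not lie in $\{K_1, K_2, K_3, P_3\}$, so the inductive hypothesis yields that $\rK{H_v}$ is not regular, and hence neither is $\rKG$.

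The base case $n = 4$ needs a separate argument, because $H_v$ has only three nodes and $\rK{H_v}$ is always a hexagon or a decagon, both regular $2$-polytopes. Here I would compare the singleton facet with the facet arising from an edge tube $\{u,w\}$ of $G$, which exists since $G$ is connected on at least two nodes. Theorem~\ref{t:prodStruc} identifies this edge facet with $\rK{P_2} \times \rK{H_{uw}}$; since $H_{uw}$ has only two nodes, $\rK{H_{uw}}$ is an interval, so the edge facet is a square. A square has $4$ vertices while the singleton facet has $6$ or $10$, so the two facets cannot be combinatorially isomorphic, violating the requirement that a regular polytope have pairwise isomorphic facets. The main obstacle I anticipate is not conceptual but bookkeeping around color palettes: one must verify that the palette induced on the facet from Theorem~\ref{t:prodStruc} is genuinely full, and that the color-template compatibility rules align on both sides of the isomorphism with $\rK{H_v}$. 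Once those checks are in place, the inductive step is immediate and the base case reduces to a vertex count.
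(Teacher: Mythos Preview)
Your argument is correct and takes a genuinely different route from the paper's.  The paper does not induct: it fixes adjacent nodes $x,y$, uses facet-transitivity to equate the facet of $\{x\}$ with the facet of $\{x,y\}$, obtaining
\[
\rK{\rc{G}{\{x\}}} \ \cong \ \rK{P_2} \times \rK{\rc{G}{\{x,y\}}},
\]
and then argues combinatorially that the interval factor forces $\rc{G}{\{x\}}$ to have only two nodes (any tube $t_1$ realizing the interval direction must have a single complementary tube $t_2$, and if either had more than one node an edge crossing between them would be a third incompatible tube).  This yields $|V(G)|\le 3$ in one stroke.

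Your approach instead leverages the hereditary fact that every facet of a regular abstract polytope is itself regular, then runs an induction on $n$ via the singleton facet $\rK{H_v}$, with a separate vertex-count comparison (singleton facet versus edge facet) to anchor $n=4$.  Both proofs ultimately rest on Theorem~\ref{t:prodStruc} and on facet-transitivity; yours trades the paper's ad hoc tube-compatibility analysis for a clean induction plus a standard structural fact from \cite{ms}, at the cost of having to split off the $n=4$ case.  The paper's argument is shorter and self-contained, while yours is arguably more transparent and would generalize more readily if one wanted stronger conclusions about low-rank sections.  The palette bookkeeping you flag is indeed routine: in a full palette, the colors assigned to the inner block of the universal tube in any $1$-tubing template are distinct, so the induced palette on $H_v$ is full.
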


\begin{proof}
First, observe that the colorful graph associahedra of $K_1$, $K_2$, $K_3$, and $P_3$ are all regular. 
The following argument examines only tube compatibility; it doesn't specify color orders. This is permitted because all facets with the same underlying tubing are isomorphic, a consequence of the full color palette. 

If $G$ contains more than one node, let $x$ and $y$ be adjacent nodes. Since $\rKG$ is  flag transitive, a facet corresponding to tube $\{x\}$ must be isomorphic to a facet corresponding to tube $\{x,y\}$. Hence
$$\rK{\rc {G} {\{x\}}} \ \cong \ \rKGn 2 \times \rK{\rc {G} {\{x,y\}}}.$$
Since $\rKGn 2$ is an interval, there must exist tubes $t_1$ and $t_2$ of $\rc {i} {\{x\}}$ which are compatible with all other tubes except each other. Removing $t_1$ from $\rc {G} {\{x\}}$  creates a number of connected components, all of which are adjacent to (and thus not compatible with) $t_1$. Hence, $\rc {G} {\{x\}}-t_1$ consists of only one component, which we denote as $t_2$.
Since $\rc {G} {\{x\}}$ is connected, there is an edge $\{v_1,v_2\}$ with $v_1\in t_1$ and $v_2\in t_2$. Suppose $t_1$ or $t_2$ has more than one node. Then, the  $\{v_1,v_2\}$ is a valid tube but is not compatible with $t_1$ or $t_2$, contradicting the definition of $t_1$ and $t_2$. We conclude that $G$ has at most 3 nodes; as it is connected, the result follows.
\end{proof}

\begin{prop}
\label{p:regular}
Let $G$ be a simple graph with $n \geq 5$ nodes, along with a full color palette. Then $\rKG$ is a regular abstract polytope if and only if $G$ is the null graph $G[n]$.
\end{prop}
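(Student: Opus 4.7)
The plan is to prove both directions, with the ``only if'' direction requiring substantially more work.

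For the easier ``if'' direction, I would verify that $\rKGn n$ is regular by exhibiting a flag-transitive subgroup of its automorphism group. The natural candidate is $S_n \times S_{n-1}$, acting by simultaneously permuting the $n$ nodes of $G[n]$ and the $n-1$ colors of the palette; this action clearly induces automorphisms on color templates. A flag of $\rKGn n$ is determined by an iterative sequence of tube additions, each specifying which remaining node is promoted to a universal tube and which color from the current universal block is assigned. A direct count yields exactly $n!(n-1)!$ such sequences, matching $|S_n \times S_{n-1}|$, so once freeness of the action on flags is verified, flag-transitivity and hence regularity follow.

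For the ``only if'' direction, assume $\rKG$ is regular with $n \geq 5$. If $G$ is connected, Lemma~\ref{l:connregular} forces $G \in \{K_1, K_2, K_3, P_3\}$, contradicting $n \geq 5$. Otherwise $G$ is disconnected with components $G_1, \ldots, G_m$ where $m \geq 2$. By Theorem~\ref{t:discon}, each polytope of $\rKG$ factors as
$$P \ = \ \rKG_1 \times \cdots \times \rKG_m \times \rKGn m.$$
Flag-transitivity of $P$ restricts to each non-trivial factor (since flags of $P$ shuffle flags of the factors), so each $\rKG_i$ is itself regular; applying Lemma~\ref{l:connregular} componentwise forces every $G_i \in \{K_1, K_2, K_3, P_3\}$.

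The heart of the argument is then a rigidity lemma: a direct product of abstract polytopes of rank $\geq 1$ is regular only if every factor is a hypercube. I would establish this by inspecting rank-$2$ faces of the product, which arise either as squares of the form edge $\times$ edge or as intrinsic $2$-faces of a single factor (vertex $\times$ $2$-face). Face-transitivity at rank $2$ forces every intrinsic $2$-face of each factor to be a square, and iterating at higher ranks forces each factor to be a hypercube. Among our candidates, $\rKG_i$ is a hypercube precisely when $G_i \in \{K_1, K_2\}$, since $\rK{K_3}$ is a hexagon and $\rK{P_3}$ is a $10$-gon; similarly $\rKGn m$ is a hypercube only when $m \leq 2$. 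Combined with the disconnected hypothesis ($m \geq 2$) and components drawn from $\{K_1, K_2\}$, this yields $m = 2$ and $n \leq 4$, contradicting $n \geq 5$. Therefore $G = G[n]$.

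The main obstacle I anticipate is the rigidity lemma itself: the rank-by-rank analysis in abstract polytope products must be carried out carefully, tracking how the face lattice of the product decomposes into products of factor faces and verifying that face-transitivity at each rank really does force the hypercube conclusion. A secondary care-point is the freeness of the $S_n \times S_{n-1}$ action on flags of $\rKGn n$, which requires tracking how color templates transform under both permutations in order to match the flag count against the group order.
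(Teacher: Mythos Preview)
Your proposal is correct and follows the same overall architecture as the paper: establish regularity of $\rKGn n$ via an explicit flag-transitive group action, then in the forward direction use Lemma~\ref{l:connregular} on each component together with the product decomposition of Theorem~\ref{t:discon} to eliminate every graph other than $G[n]$.

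The substantive difference lies in the rigidity step. The paper invokes a result of Gleason and Hubard \cite{gh}: abstract polytopes admit a unique prime factorization under Cartesian product, and a regular abstract polytope is either prime or a product of intervals (i.e., a hypercube). This immediately yields the case split. You instead propose to prove the needed special case directly, by inspecting rank-$k$ faces of the product and inducting on $k$. Your sketch is sound: with at least two factors of rank $\geq 1$, mixed rank-$2$ faces are squares, and face-transitivity (a consequence of flag-transitivity) cascades upward so that every pure rank-$k$ face---hence each factor itself---is forced to be a $k$-cube. This route is more self-contained than the paper's citation, at the cost of some bookkeeping; the paper's route is shorter but imports a nontrivial external theorem.

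One small correction: your justification that ``flag-transitivity of $P$ restricts to each non-trivial factor (since flags of $P$ shuffle flags of the factors)'' is not quite right as stated---automorphisms of a product need not preserve the factor decomposition (already for the square $I\times I$ the swap of factors is an automorphism). The clean argument, which the paper also uses without comment, is that each factor $\rKG_i$ appears as a face of the product, and every face of a regular abstract polytope is itself regular. Incidentally, your flag count $n!(n-1)!$ and choice of $S_n\times S_{n-1}$ for the backward direction are correct; the paper's parametrization of flags by $\Gamma_{n-1}\times\Gamma_{n-1}$ undercounts by a factor of $n$.
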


\begin{proof}
In the backwards direction, assume $G$ is the null graph.  Let $\Gamma_n$ denote the permutation group on $n$ elements. The flags of $\rKGn n$ are in bijection with the elements $(s, t)$ of $\Gamma_{n-1} \times \Gamma_{n-1}$: 
\begin{enumerate}
\item $s$ determines the labeling of the nodes of $G[n]$, determining the tube ordering in the flag. 
\item $t$ determines the assignment of colors to tubes.
\end{enumerate}
Given flags $(s_1, t_1)$ and $(s_2, t_2)$ of $\rKGn n$, there exist permutations $p, q$ such that $ps_1 = s_2$ and $qt_1 = t_2$, with $p$ relabeling nodes and $q$ relabeling colors. Hence $(s,t) \mapsto (ps, qt)$ is the desired automorphism, ensuring the regularity of $\rKGn n$.

In the forwards direction, assume  $\rKG$ is regular for a simple graph $G$ with $m$ connected components, $G_1, \dots, G_m$. Thus $\rKG_i$ is also regular, and by Lemma~\ref{l:connregular}, $G_i$ must be one of $K_1$, $K_2$, $K_3$, or $P_3$.  Since the palette is full, Corollary~\ref{c:discon} claims that $\rKG$ is isomorphic to
\begin{equation} \label{e:cube}
\rKG_1 \times \dots \times \rKG_m \times \rKGn m\,.
\end{equation}

In \cite[Section 5]{gh}, it is proven that all abstract polytopes have a unique prime factorization under the Cartesian product. Moreover, if an abstract polytope is regular, then it is either prime or has a prime factorization that is isomorphic to a product of intervals. 
We apply these results to our situation.

If $\rKG$ is prime, then either $G$ is connected or  is the null graph $G[n]$. Since $n \geq 5$, Lemma~\ref{l:connregular} guarantees the impossibility of the former, meaning \eqref{e:cube} is isomorphic to a product of intervals.
But component $\rKG_m$ is isomorphic to a product of intervals only when $m=1$ or $m=2$. The former case has been discussed. For the latter, since $n \geq 5$, one component must be $K_3$ or $P_3$. But since neither is isomorphic to a product of intervals, $G$ must be the null graph $G[n]$.
\end{proof}

%
%
\section{Proofs of the Main Results} \label{s:proofs}
\subsection{Exchange Graph}

For the duration of this paper, let $G$ be a simple connected graph with $n$ nodes and connectivity $k$, along with a color palette.  Thus far, we have described $\rKG$ as a collection of posets of color templates, highlighting the structure of $\rKG$ and its relationships to connectivity and color tubings. Drawing inspiration from~\cite{ahos1}, we provide an alternate definition of $\rKG$ starting from its 1-skeleton.

\begin{defn}
The nodes of the \emph{exchange graph} $\eG$ correspond to maximal color tubings of $G$, where two nodes are adjacent if their tubings differ at precisely one tube. Note that the color of the differing tubes must be the same, and by Proposition~\ref{p:maxtubing}, the tube type (inner versus outer) must also be the same.
\end{defn}

By Proposition~\ref{p:maxtubing}, each maximal tubing of $G$ contains precisely $k-1$ outer tubes: a unique outer tube containing $i$ nodes, one for each $n-k < i < n$.  We say two maximal color tubings are \emph{color-matched} if the matching pairs of outer tubes (with $i$ nodes) from each tubing are identically colored.

\begin{prop} \label{p:connect}
Two maximal color tubings of $G$ are color-matched if and only if they are nodes in the same connected component of $\eG$.
\end{prop}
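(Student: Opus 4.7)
The first implication (same component implies color-matched) is essentially definitional. Along any edge of $EG$, exactly one tube $t$ is swapped for a new tube $t'$ of the same color and same type. If $t$ is inner, the outer tubes are unchanged. If $t$ is outer, Proposition~\ref{p:maxtubing} forces $|t|=|t'|$, since each maximal tubing contains a unique outer tube of every size in $\{n-k+1,\dots,n-1\}$; thus the outer tube of that size retains its color across the edge. Iterating along any path in $EG$ gives this direction.

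For the converse, given color-matched $T_1$ and $T_2$, I plan to induct on $n$ and build a path in $EG$ in two phases. The first phase aligns the chains of outer tubes. Each maximal tubing determines a chain $t_{n-k+1}\subset\dots\subset t_{n-1}$, equivalent to an ordered removal sequence $(v_1,\dots,v_{k-1})$ where $v_j$ is the unique node in $t_{n-j+1}\setminus t_{n-j}$ (taking $t_n=G$). A flip of the outer tube of size $n-j$ in $EG$ corresponds exactly to swapping $v_j$ with $v_{j+1}$ in this sequence, provided the resulting sequence still gives a valid chain, i.e.\ every intermediate induced subgraph remains connected. These flips preserve color-matching automatically, since they exchange outer tubes of the same size and color. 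I would then show that the two removal sequences of $T_1$ and $T_2$ are linked by a sequence of admissible swaps, yielding a tubing $T_1'$ whose outer tubes coincide with those of $T_2$. In the second phase, with the outer tubes of $T_1'$ and $T_2$ now identical, all remaining tubes lie inside the shared smallest outer tube $t_{n-k+1}$, and Proposition~\ref{p:core} identifies them with tubes of the core $\rc{t_{n-k+1}}{T_1'}$, a graph on $n-k+1 < n$ nodes. The induced sub-tubings, endowed with the inherited inner-block colors, form color-matched maximal tubings of a smaller colorful graph associahedron, so the induction hypothesis delivers a path between them that lifts directly to a path in $EG$.

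The main obstacle is the first phase: connectivity of the space of valid outer-tube chains under admissible adjacent transpositions. I expect this to follow from an argument exploiting the $k$-connectivity of $G$, which provides enough flexibility in the choice of each successive removed node. When a direct swap is blocked because an intermediate induced subgraph would become disconnected, the remedy is to first perform an auxiliary inner-tube flip that temporarily reconfigures the relevant core graph, then undo it after the outer-tube swap has succeeded. A secondary subtlety is that every intermediate flip must still respect the template compatibility rules of Section~\ref{s:colors}; this should be manageable because color-matching already pins down all outer-block colors in the universal template, so the remaining inner-block assignments leave enough room to accommodate the auxiliary moves.
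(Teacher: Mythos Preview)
Your forward direction is fine and matches the paper. The converse has a real gap in Phase~1. The adjacent transpositions $v_j\leftrightarrow v_{j+1}$ you describe (for $1\le j\le k-2$) only permute the set $\{v_1,\dots,v_{k-1}\}$; they never change it. But two color-matched maximal tubings need not share the same smallest outer tube $t_{n-k+1}=G\setminus\{v_1,\dots,v_{k-1}\}$, so their removal sequences can involve entirely different node sets, and your moves cannot align them. Changing the node set requires flipping $t_{n-k+1}$ itself, and any such replacement must remain compatible with all inner tubes sitting inside it --- exactly the interaction your outline postpones. Relatedly, your induction does not reduce when $k=1$: there are no outer tubes, Phase~1 is vacuous, and the ``core'' in Phase~2 is $G$ itself. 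This $k=1$ case is not a degenerate base case; it is the essential content of the proposition.

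The paper bypasses Phase~1 altogether. It uses connectedness of the \emph{uncolored} $\KG$ $1$-skeleton: any monochrome path lifts to $\eG$ because a swapped-in tube simply inherits the color of the tube it replaces, and color-matching is preserved along the lift. This carries both $T_1$ and $T_2$ to color-matched colorings $U_1,U_2$ of one common underlying maximal tubing $U$, reducing everything to the $k=1$ problem of exchanging inner-tube colors on a fixed tubing. That is then handled by a separate induction on $n$: a structural claim that any connectivity-$1$ graph on at least three nodes has a node $w$ with $G-w$ still of connectivity $1$, combined with an explicit color-swap algorithm on a tubing chosen to contain $t=G-w$. Supplying this inner-color exchange argument is precisely what your plan is missing; once you have it, the monochrome-lift trick makes your Phase~1 unnecessary.
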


\begin{proof}
The discussion above guarantee that if $T_1$ and $T_2$ are connected by a path, then the outer tubes of  $T_1$ and $T_2$ with the same number of nodes will have identical colors. Thus, they must be color-matched, concluding one direction.

For the other direction, we show there exists a path between two color-matched nodes $T_1$ and $T_2$ of $\eG$.  Denote $mono(T_*)$ to be the monochrome version of $T_*$. Since the 1-skeleton of the classical associahedron $\KG$ is connected, there exists a path between $mono(T_*)$ and any monochrome maximal tubing $U$ of $G$.  This naturally lifts to a path in $\eG$, between $T_*$ and some node $U_*$ where $mono(U_*) = U$.
To prove there exists a path between $U_1$ and $U_2$ with $mono(U_1)=U=mono(U_2)$ (concluding the proof), we show that the colors of any two inner tubes in a coloring of $U$ can be exchanged. Repeatedly performing color exchanges yields a path between the desired tubings. 

To start, notice that the inner tubes of any maximal tubing reside on a subgraph of $G$ with connectivity 1. Since we only need to show that the colors of any two inner tubes can be exchanged, assume that $G$ has connectivity 1. 

\begin{claim}
If $G$ has at least 3 nodes, there is a node $w$ such that $G-w $ has connectivity 1.  
\end{claim}

\begin{proof}
Since $G$ has connectivity 1, there exists a node $v$ whose removal disconnects $G$ into multiple components, $G_1, G_2, \cdots ,G_m$. (If each component contains only one node, there will be at least two components and the single node of $G_1$ is an appropriate choice for $w$.)  Without loss of generality, say component $G_1$ contains more than one node. Define $H_1$ to be the induced subgraph on the nodes of $G_1$ together with $v$. Construct the spanning tree of $H_1$ and choose any leaf of this tree (other than $v$) and call it $w$.   Thus, $H_1-w$ is connected, and thus, so is $G-w$. However, since $G_1$ contains more than one node, $(G-w)-v$ is not connected.
\end{proof}

We proceed by induction on the number of nodes of $G$. Since the only graph of connectivity 1 with less than 3 nodes is the path on 2 nodes (for which $\eG$ is connected), consider a graph $G$ with $n \geq 3$ nodes. We now specify the tubing $U$: Pick a maximal tubing that includes the tubes $t:=G-w$ and $t':=G_m$, in addition to the tubes $G_1-w, G_2, \dots, G_{m-1}$. The remaining tubes can be chosen arbitrarily. By the inductive hypothesis, the colors of any two tubes inside of $t$ can be exchanged. Figure~\ref{f:exchange} depicts the algorithm which exchanges the colors of $t$ and $t'$, completing the proof.\footnote{This algorithm appears in \cite[Figure 1]{orbit} as walking halfway around the 10-gon in Figure~\ref{f:ksg-path} above.}
\end{proof}

\begin{figure}[h]
\includegraphics[width=\textwidth]{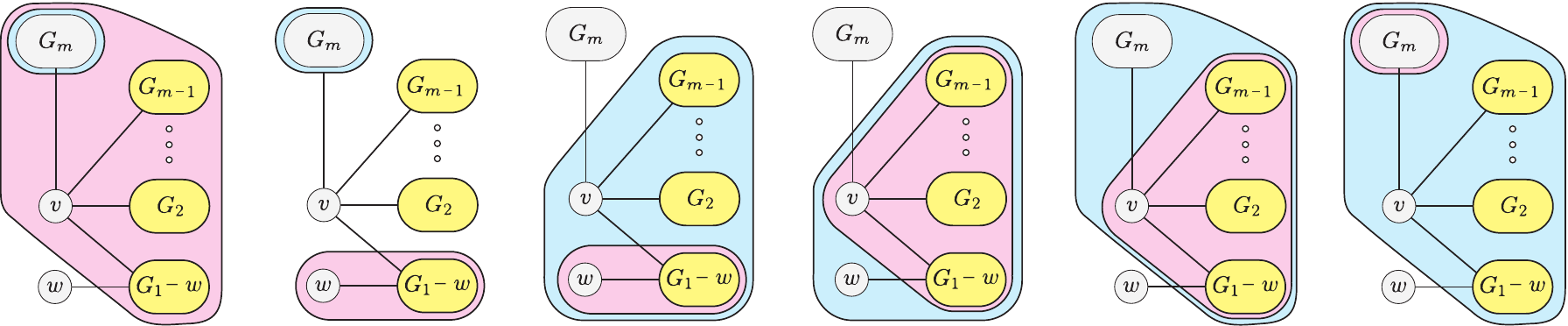}
\caption{Exchanging the colors of two tubes.}
\label{f:exchange}
\end{figure}

\subsection{Poset Isomorphism}

As the vertices of $\rKG$ are the nodes of $\eG$, it is possible to redefine the faces of $\rKG$ as subsets of these nodes, providing an alternate poset formulation of $\rKG$. In particular, we later show that $\eG$ is the 1-skeleton of $\rKG$.

\begin{defn}
Given a tubing $T$ of $G$, a path in $\eG$ \emph{preserves} $T$ if for each node in the path, the associated maximal tubing contains $T$.
\end{defn}

\begin{defn}
Let $\PeG$ be a collection of posets, one for each connected component of $\eG$. Each poset consists of elements and a partial order, as outlined below:
\begin{enumerate}
\item {\bf Faces:} For the faces of rank $j=0,1,2, \dots n-1$, define the $j$-face $(T,v)$ to be the set of nodes in the component of $\eG$ that are reachable from $v$ via a path that preserves the $(n-j-1)$-tubing $T$. Append a unique face $f_{\emptyset}$ of rank -1 that has $f_{\emptyset} \prec f_0$ for all faces $f_0$ of rank 0.
\item {\bf Partial order:} Given faces $f$ and $h$, we say that $f\prec h$ if and only if the node set of $f$ is contained in the node set of $h$.
\end{enumerate}
\end{defn}

\begin{rem}
The representation of the face $(T,v)$ is not unique: $(T,v) = (T',v')$ if and only if $T = T'$ and $v$ and $v'$ are connected by a path preserving $T$. Consider two faces $f=(T_f,v_f)$ and $h=(T_h,v_h)$, where $f\prec h$. Notice that $h$ can be equivalently represented as $(T_h,v_f)$ and $T_h$ must consist of a subset of the tubes of $T_f$. 
\end{rem}

By the definition of $\PeG$, there is one poset for each connected component of $\eG$.  Each $(n-1)$-face  $(T,v)$  has $|T|=0$ and corresponds to all of the nodes in a connected component of $\eG$. Each 0-face $(T,v)$ has $|T|=n-1$ and corresponds to the single node $v$ of $\eG$. All color-matched faces of rank 0 are incident to the same least face $f_{\emptyset}$.
In a similar manner, there is one poset in the collection $\rKG$ for each distinct assignment of colors to the $k-1$ outer blocks in the universal tube of $G$: for two color templates to be comparable, they must be compatible with the same color template for the 0-tubing.

\begin{prop} \label{p:equivdefn}
There is a bijection between the collections $\rKG$ and $\PeG$. Moreover, each poset of $\rKG$ and its corresponding poset in $\PeG$ are order-isomorphic.
\end{prop}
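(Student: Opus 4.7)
The plan is to exhibit an explicit pair of maps $\Phi:\rKG\to\PeG$ and $\Psi:\PeG\to\rKG$, each fixing the underlying tubing $T$, and then check that they are mutually inverse and order-preserving. Define $\Phi(T,\mathcal O)=(T,v)$, where $v$ is any maximal color tubing obtained by refining $T$ as follows: within each core $\rc{t}{T}$ (including $\rc{G}{T}$), choose a maximal tubing and color its outer tubes with the outer-block colors of $\mathcal O$ at $t$ in the prescribed order, and color its inner tubes bijectively with the inner-block colors; the color of $t$ itself is dictated by its first block. Define $\Psi(T,v)=(T,\mathcal O)$ by the inverse read-off: for each $t\in T\cup\{G\}$, record the first block as the color of $t$, the outer blocks as the colors of the outer tubes of $\rc{t}{T}$ inherited from $v$ in order, and the inner block as the multiset of inner-tube colors of $\rc{t}{T}$.

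The heart of the argument lies in well-definedness. For $\Psi$, note that an edge of $\eG$ that preserves $T$ must exchange a single tube strictly inside some core $\rc{t}{T}$; hence colors of tubes of $T$ are unchanged, and by Proposition~\ref{p:maxtubing} the inner/outer type within each core is also preserved, so the ordered outer blocks are invariant along $T$-preserving paths while the inner-block multiset merely undergoes permutation. For $\Phi$, I must show that any two candidate refinements $v,v'$ are joined by a $T$-preserving path in $\eG$. Via the product decomposition of Theorem~\ref{t:prodStruc}, this reduces to the corresponding claim inside a single core: any two maximal color tubings of $\rc{t}{T}$ that agree on outer-tube colors and have identical inner-color multisets lie in the same component of the exchange graph of that core. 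This is exactly Proposition~\ref{p:connect} applied to the core, together with the observation that the color-exchange algorithm of Figure~\ref{f:exchange} operates entirely inside the core and hence never disturbs a tube of $T$.

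Mutual inversion is immediate from the definitions of $\Phi$ and $\Psi$. For the order-isomorphism, suppose $(T,\mathcal O)\prec(T',\mathcal O')$ in $\rKG$; the three compatibility clauses are engineered so that a single maximal color tubing $v$ can witness both $\Phi$-values simultaneously, and since $T'\subset T$ every $T$-preserving path is automatically $T'$-preserving, so the node set of $\Phi(T,\mathcal O)$ embeds into that of $\Phi(T',\mathcal O')$. Conversely, nested node sets in $\PeG$ force $T'\subset T$, and selecting a common $v$ and reading off via $\Psi$ yields two ordered partitions whose relation is checked against the compatibility clauses by a direct case analysis on whether the added tube's colors are drawn from outer blocks or the inner block of $t_*$.

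The step I expect to be the main obstacle is the well-definedness of $\Phi$, i.e.\ the assertion that two arbitrary inner-tube color assignments within each core really do lie in a common $T$-preserving component of $\eG$. This is essentially a recursive invocation of Proposition~\ref{p:connect} inside the cores, and the delicate point is verifying that the exchange moves produced there stay within a given core and therefore preserve $T$ globally. Once this is formalized via Theorem~\ref{t:prodStruc}, the remaining verifications — that the $\Psi$ read-off is constant on $T$-preserving components and that the bijection respects the orderings — reduce to direct bookkeeping against the compatibility definitions.
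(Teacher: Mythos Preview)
Your approach is essentially the paper's: both hinge on applying Proposition~\ref{p:connect} to each core graph $\rc{t}{T}$ to identify the $T$-preserving component of $\eG$ through $v$ with the set of $0$-faces below $(T,\mathcal O)$ in $\rKG$. The paper compresses this into one sentence (``Apply Proposition~\ref{p:connect} to each core graph of $T$''), whereas you unpack it into explicit inverse maps $\Phi$ and $\Psi$ and check well-definedness in each direction; the underlying mechanism is identical.

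One small caution: you invoke Theorem~\ref{t:prodStruc} to justify that a $T$-preserving path in $\eG$ decomposes into paths in the exchange graphs of the cores, but that theorem is phrased on the $\rKG$ side, and using it here risks the appearance of circularity. What you actually need is the purely combinatorial bijection of Proposition~\ref{p:core} (tubes of $\rc{t}{T}$ $\leftrightarrow$ tubes compatible with $T$ inside $t$), from which the exchange-graph decomposition follows directly without reference to $\rKG$. Citing Proposition~\ref{p:core} instead would make the argument cleanly self-contained and match how the paper organizes the dependencies.
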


\begin{proof}
Each poset of $\rKG$ corresponds to a distinct assignment of colors to the outer blocks of the universal tube of $G$, reimagined as an assignment of colors to the outer tubes of all maximal tubing. In fact, this assignment is precisely what distinguishes each connected component of $\eG$. The bijection between the collections $\rKG$ and $\PeG$ is immediate.

Consider a poset $\PeGi$ in the collection $\PeG$ and its corresponding poset $\rKGi$ in $\rKG$. The 0-faces of $\PeGi$ and $\rKGi$ both correspond to the color-matched maximal color tubings of $G$, yielding an immediate bijection between 0-faces. Moreover, this bijection can be extended to faces of higher rank. Let $f=(T,\mathcal O)$ be a face of $\rKGi$ with positive rank, and let $f_0$ be a 0-face incident to $f$. Suppose $f_0$ corresponds to the node $v_f\in\eG$. Let $h_0$ be an arbitrary 0-face associated to some $v_h\in\eG$. Apply Proposition~\ref{p:connect} to each core graph of $T$. There is a path from $v_h$ to $v_f$ that preserves $T$ if and only if 
$h_0\prec f$. From this, an order-embedding bijection between the faces of $\rKGi$ and $\PeGi$ follows. Each face $f$ of $\rKGi$ is mapped to the set of nodes in $\eG$ which correspond to precisely the set of maximal tubings represented among its 0-faces. 
\end{proof}

\subsection{Flags}

We consider the flags of $\PeG$. To simplify notation, we exclude the face $f_{-1}$ from the flags. By the discussions above, each flag is of the form 
$$\Phi \ = \ \{(T_0,v), \ \dots , \ (T_{n-1},v)\}\ = \ (\mathcal{T}, v)\,,$$ 
where $\mathcal T$ is the maximal nested family of subsets $T_0\subset T_1 \subset \cdots \subset T_{n-1}$. 

\begin{prop}
\label{p:diamond}
Each poset in the collection $\PeG$ satisfies the diamond condition.
\end{prop}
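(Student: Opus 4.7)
My plan is to prove the diamond condition directly in $\PeG$, which by Proposition~\ref{p:equivdefn} transfers to $\rKG$. The argument splits into two cases based on whether $f = f_\emptyset$.

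For the main case $j \geq 1$ (so $f$ has positive rank), I would invoke the remark preceding the proposition: given $f \prec h$ of ranks $j-1$ and $j+1$, I can represent $f = (T_f, v)$ and $h = (T_h, v)$ on a common vertex $v$, with $T_h \subsetneq T_f$ and $|T_f \setminus T_h| = (n-j) - (n-j-2) = 2$. Writing $T_f \setminus T_h = \{t_1, t_2\}$, I would propose $g_i := (T_h \cup \{t_i\}, v)$ for $i = 1,2$ as the two candidate intermediate $j$-faces and verify: (i) each $T_h \cup \{t_i\}$ is a valid $(n-j-1)$-tubing, being a subset of the valid tubing $T_f$; (ii) because $T_h \subset T_h \cup \{t_i\} \subset T_f$, any path preserving the larger tubing also preserves the smaller, giving nested node sets and hence $f \prec g_i \prec h$; (iii) $g_1 \neq g_2$ by the uniqueness clause of the remark, since their underlying tubings differ; (iv) for any intermediate $g = (T_g, v_g)$, applying the remark to both $f \prec g$ and $g \prec h$ yields $g = (T_g, v)$ with $T_h \subset T_g \subset T_f$, and $|T_g| = n-j-1$ forces $T_g \in \{T_h \cup \{t_1\}, T_h \cup \{t_2\}\}$.

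For the base case $j = 0$, where $f = f_\emptyset$ and the common-vertex trick is unavailable, I would instead appeal to Theorem~\ref{t:prodStruc}. The 1-face $h$ is combinatorially equivalent to the product $\rK{\rc{t_1}{T_h}} \times \cdots \times \rK{\rc{G}{T_h}}$, whose $n-1$ factors have core sizes summing to $n$ and dimensions summing to $1$. Hence exactly one factor $\rK{H}$ satisfies $|H| = 2$, while the remaining $n-2$ factors are $0$-dimensional. A direct check (for $H = P_2$ or $H = G[2]$, each with a palette of one color) shows that $\rK{H}$ is an interval with exactly $2$ vertices. Therefore $h$ itself has exactly $2$ vertices, and these are the only rank-$0$ faces below $h$.

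I expect the main obstacle to be the $j = 0$ case. A direct argument through the exchange graph would have to confirm that both maximal tubings extending $T_h$ yield $\eG$-adjacent colored tubings, which requires careful tracking of the color-template assignment rules --- especially the unordered inner blocks, where multiple colors might seem to be available. Reducing to Theorem~\ref{t:prodStruc} localizes the issue to the almost-trivial verification that $\rK{H}$ has $2$ vertices when $|H| = 2$.
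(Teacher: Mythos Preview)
Your argument is correct. For $j \geq 1$ your approach is essentially identical to the paper's: both reduce to the observation that between an $(n-j)$-tubing $T_f$ and an $(n-j-2)$-tubing $T_h \subset T_f$ there sit exactly two intermediate $(n-j-1)$-tubings, and the common-vertex representation from the remark pins down the corresponding faces uniquely.

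The $j=0$ case is where you diverge. The paper dispatches it in one line by appealing to ``the behavior of the monochrome version of $T_0$'': forgetting colors, the classical graph associahedron $\KG$ is a convex polytope, so the edge associated to the $(n-2)$-tubing $T_1$ has exactly two endpoints; since the single remaining palette color is forced on the added tube, these lift to exactly two adjacent nodes of $\eG$ in the node set of $h$. Your route through Theorem~\ref{t:prodStruc} is a genuine alternative: it localizes the question to the unique $2$-node core and checks directly that $\rK H$ is an interval for $|H|=2$. Your approach is more self-contained (it does not lean on the convexity of $\KG$ as an external fact) and makes the role of the color palette explicit, at the cost of invoking the product-structure theorem. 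The paper's approach is terser and exploits that the uncolored theory is already fully developed. Both are sound, and there is no circularity in your use of Theorem~\ref{t:prodStruc}, since its proof depends only on Proposition~\ref{p:core} and not on the abstract-polytope axioms.
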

\begin{proof}
Let $\PeGi$ be a poset of $\PeG$. Each flag $\Phi = (\mathcal{T}, v)$ of $\PeGi$ has exactly one $j$-adjacent flag $\Phi^j$, differing from $\Phi$ in the $j$-th face. When $j\geq1$, the tubing $T_{j-1}$ is a subset of $n-j$ tubes and the tubing $T_{j+1}$ is a subset of $n-j-2$ tubes. Hence, there are precisely two $(n-j-1)$-subsets $T$ such that $T_{j+1}\subset T \subset T_{j-1}$, one of which is $T_j$; this leaves precisely one $j$-adjacent face. When $j=0$, one can appeal to the behavior of the monochrome version of $T_0$.
\end{proof}

\begin{prop}
\label{p:strongflagconn}
Each poset in the collection $\PeG$ is strongly flag-connected.
\end{prop}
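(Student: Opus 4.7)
The plan is to prove strong flag-connectivity by induction on the rank $n-1$, reducing it to flag-connectivity of every section. By standard abstract polytope theory (see \cite{ms}), a ranked poset satisfying the diamond condition is strongly flag-connected if and only if every section $[f,h]$ with $f\prec h$ is flag-connected. Since Proposition~\ref{p:diamond} has already established the diamond condition, the task reduces to verifying that every section of a poset $\PeGi$ in the collection $\PeG$ is flag-connected.

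Fix $\PeGi$ and consider a section $[f,h]$. Base cases with $\mathrm{rk}(h)-\mathrm{rk}(f) \leq 2$ follow immediately from the diamond condition. For the inductive step, I would split into two cases. First, if $[f,h]$ equals the entire poset $[f_\emptyset, f_{\mathrm{top}}]$, I prove flag-connectivity directly using Proposition~\ref{p:connect}: given flags $\Phi$ and $\Psi$, their $0$-faces $v_\Phi$ and $v_\Psi$ lie in the same connected component of $\eG$ and are linked by a sequence of maximal tubings $v_\Phi = v_0, v_1, \dots, v_\ell = v_\Psi$, where each consecutive pair differs in exactly one tube. Each such pair shares an $(n{-}2)$-tubing, and by the diamond condition that shared $1$-face lies above exactly two $0$-faces, producing a $0$-adjacency between suitably lifted flags. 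The higher-rank faces of these lifted flags are then aligned using inductive flag-connectivity of proper sections, so the entire path $\Phi\to\Psi$ can be assembled.

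Second, if $[f,h]$ is a proper section, I would appeal to Theorem~\ref{t:prodStruc}. When $f=f_\emptyset$ and $h$ is a $j$-face $(T,v)$, the section $[f_\emptyset,h]$ is precisely the product $\rK{\rc{t_1}{T}} \times \cdots \times \rK{\rc{G}{T}}$ of colorful graph associahedra of lower rank. By induction each factor is strongly flag-connected (hence flag-connected), and flag-connectivity is preserved by Cartesian products of posets. For sections $[f,h]$ with both endpoints proper, including those of the form $[f,f_{\mathrm{top}}]$, I would derive an analogous product decomposition by restricting attention to tubings that lie between the two defining tubings; the ambient product then factors further, and induction applies.

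The main obstacle I anticipate is handling sections $[f,f_{\mathrm{top}}]$ that sit \emph{above} a face $f$, since Theorem~\ref{t:prodStruc} is phrased only for the section below a face. This requires carefully tracking how the color template reorganizes when tubes are removed from the tubing $T$ defining $f$: one must recognize coarsened tubings $T'\subset T$ as colorful graph associahedra on a suitable quotient graph with an induced color palette inherited from the outer and universal blocks of $T$. Once this symmetric product structure is secured on both sides of an arbitrary face, the inductive scheme closes and strong flag-connectivity of $\PeGi$ follows.
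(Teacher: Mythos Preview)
Your overall strategy --- reducing strong flag-connectivity to flag-connectivity of every section via the diamond condition, and handling lower sections $[f_\emptyset,h]$ through the product decomposition of Theorem~\ref{t:prodStruc} together with induction on $n$ --- is sound and genuinely different from the paper's argument. The paper proceeds more directly: it first observes that every vertex-figure $[f_0,f_{n-1}]$ is the Boolean lattice on the tubes of the maximal tubing $T_0$ (hence strongly flag-connected), and then, for two flags $\Phi,\Lambda$, inducts on the length of a $T_m$-preserving path in $\eG$ (where $m$ is the least rank at which $\Phi$ and $\Lambda$ agree), splicing together vertex-figure moves with single $0$-face swaps along that path. No appeal to Theorem~\ref{t:prodStruc} or to the identification $\rKG\cong\PeG$ is needed.

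Your anticipated obstacle, however, is misdiagnosed, and the route you propose for it would not work. Sections $[f,f_{\mathrm{top}}]$ (and more generally $[f,h]$ with $f\neq f_\emptyset$) are \emph{not} colorful graph associahedra of any quotient graph, and there is no ``induced color palette inherited from the outer and universal blocks'' that produces such an object; for instance, a vertex-figure here has the face lattice of a simplex, whereas $\rKGn{m}$ with a nontrivial palette does not. The correct and much simpler observation is precisely the one the paper makes: writing $f=(T_f,v)$, every face above $f$ has the form $(T',v)$ for a unique $T'\subseteq T_f$, so $[f,f_{\mathrm{top}}]$ is isomorphic to the Boolean lattice on $T_f$, and any $[f,h]$ with $f\neq f_\emptyset$ is an interval therein. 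These are trivially strongly flag-connected. Once you replace the quotient-graph plan with this observation, your scheme closes: upper sections as Boolean intervals, lower sections via Theorem~\ref{t:prodStruc} (transported through Proposition~\ref{p:equivdefn}) and induction on $n$, and the whole poset via the $\eG$-path argument you sketch, which itself only requires flag-connectivity of vertex-figures. The paper's route is more self-contained; yours, once patched, is more modular but leans on the product structure.
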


\begin{proof} 
Our approach builds on \cite[Lemma 3.1]{ahos1} and adapts their technique to our more general poset.
Let $\PeGi$ be a poset of $\PeG$. For a polytope of rank $n-1$, a $\textit{vertex-figure}$ is the set of faces $f_{n-1}/f_0=\{h\hspace{.1cm}|\hspace{.1cm} f_0 \prec h \prec f_{n-1}\}$, where $f_0$ is a face of rank 0 and $f_{n-1}$ is a face of rank $n-1$. If $f_0=(T,v)$ for some tubing $T$ and node $v$, then all faces of the vertex-figure can be written the form $(T',v)$, where $T'\subset T$. Moreover, they are in bijection with the subsets of tubes of $T$. Additionally, for any two faces in the vertex-figure, $(T',v) \prec (T'',v)$ if and only if $T'' \subset T'$. Hence, the vertex-figure of $f_0$ is isomorphic to the Boolean lattice on $T$, and thus strongly flag-connected. 

Now, let $\Phi$ and $\Lambda$ be flags in the same poset of $\PeGi$ with 
$$\Phi \ = \ \{(T_0,v), \ \dots , \ (T_{n-1},v)\}\ = \ (\mathcal{T}, v) \ \ \textup{and} \ \ \Lambda \ = \ \{(U_0,w), \ \dots , \ (U_{n-1},w)\}\ = \ (\mathcal{U}, w)\,.$$ 
We show that there exists a sequence of adjacent flags
$$\Phi = \Phi_0, \ \Phi_1, \dots, \ \Phi_r = \Lambda$$
such that $\Phi \cap \Lambda \subset \Phi_i$.
Let $J = \{j\hspace{.1cm}|\hspace{.1cm} (T_j,v) = (U_j,w)\}$ be the set of indices of the $j$-faces shared by $\Phi$ and $\Lambda$, and let $m$ denote the lowest non-negative index in $J$. (Such an $m$ exists since the connectivity of $\eG$ ensures that $n-1$ is an upper bound.) 
By the definition of $\PeGi$, there is a path 
$$v = v_0, \ v_1, \dots, \ v_p = w$$ 
from $v$ to $w$ in $\eG$ which preserves the tubing $T_m=U_m$. To prove that each poset is strongly flag-connected, we induct on the length $p$ of the path.

If $p=0$, then $v=w$ and we can appeal to the strong flag-connectivity of vertex-figures. If $p \geq1$, then $m \geq 1$ and $T_m$ is not a maximal tubing. By induction, we need to construct a sequence of flags from $\Phi = (\mathcal{T}, v)$ to (some yet to be specified) flag $\Lambda' = (\mathcal{T}', v_1)$. 
\begin{enumerate}
\item {\bf Selecting Flag $\Lambda'''$:} Since $v$ and $v_1$ are adjacent, their associated maximal tubings $T_0$ and $S_0$ differ at precisely one tube. Let $S_1$ denote the shared $(n-2)$-tubing.  Since nodes $v$ and $v_1$ are contained in $(T_m,v) = (U_m,w)$, $T_m\subset T_0$ and $T_m\subset S_0$. Thus, $T_m\subseteq T_0 \cap S_0=S_1$ and $(S_1,v)\preceq (T_m,v)$.  This guarantees the existence of a flag $\Lambda'''$ that contains $\Phi \cap \Lambda$ and has $(S_1, v)$ as its 1-face.  Moreover, $\Phi$ and $\Lambda'''$ share a 0-face $(T_0,v)$. By the strong flag-connectivity of vertex-figures, there exists a sequence of flags from $\Phi$ to $\Lambda'''$ where each flag in the sequence contains $(\Phi \cap \Lambda) \subseteq (\Phi \cap \Lambda''')$. 

\item {\bf Selecting Flag $\Lambda''$:} Since $v_1$ is in the node set of $(S_1,v_1) = (S_1, v)$, there exists a flag $\Lambda''$ which is the same as $\Lambda'''$ except for its 0-face $(S_0,v_1)$. Accordingly, $\Lambda''$ is adjacent to $\Lambda'''$ and $\Phi \cap \Lambda \subset \Lambda''$.

\item {\bf Selecting Flag $\Lambda'$:} Define flag $\Lambda' = \{(S_0,v_1), \cdots (S_{n-1}, v_{n-1})\}$, where $S_0$ and $S_1$ are defined as above, and $S_j = T_j = U_j$ for all $j \in J$. Such a flag is guaranteed to exist because $U_j = T_j \subseteq T_m \subseteq S_1$ for all $j\in J$. By construction, $\Phi \cap \Lambda$ is also contained in $\Lambda'$. Again, since $\Lambda''$ and $\Lambda'$ share a 0-face, the strong flag-connectivity of vertex-figures dictates that there exists a sequence of flags from $\Lambda''$ to $\Lambda'$ such that each flag contains $(\Phi \cap \Lambda)\subseteq(\Lambda'' \cap \Lambda')$.
\end{enumerate}
Combining the flag sequences from $\Phi$ to $\Lambda'''$ to $\Lambda''$ to $\Lambda'$ with one from $\Lambda'$ to $\Lambda$ (guaranteed by the inductive hypothesis) produces the desired flag sequence.
\end{proof}

\subsection{Finishing Touches}

With this machinery, the main results are proven.

\begin{proof}[Proof of Theorem~\ref{t:main}]
Let $G$ be a simple graph with $n$ nodes and connectivity $k$, along with a color palette.
Each distinct ordering of $k-1$ colors from the palette determines an assignment of colors to the $k-1$ outer blocks of the universal tube of $G$. As such, each ordering corresponds to a unique poset in the collection $\PeG$.
Let $\PeGi$ be a poset of $\PeG$. By construction, conditions (1) and (2) are satisfied. Proposition~\ref{p:strongflagconn} guarantees strong flag-connectivity, satisfying condition (3) and Proposition~\ref{p:diamond} verifies the diamond condition, satisfying condition (4). 

The 1-skeleton of $\PeGi$ is isomorphic to its associated connected component of $\eG$. Moreover, since every node in the component of $\eG$ corresponds to a maximal tubing of $G$ and is adjacent to precisely $n-1$ maximal tubings, we can conclude that $\PeGi$ is simple. This proves that $\PeG$, and thus $\rKG$, is a collection of simple abstract polytopes of rank $n-1$.
\end{proof}

Finally, we prove that $G[n]$ is an abstract polytope.

\begin{proof} [Proof of Proposition~\ref{p:null}]
By employing the same strategy as used to prove Theorem \ref{t:main}, it suffices to prove that the exchange graph of $G[n]$ is connected, which in turn requires showing that any two colors within a given maximal tubing can be exchanged. Let $T$ be a maximal tubing of $G[n]$ and let $v_n$ be the node without a tube. To exchange the colors of the tubes on nodes $v_i$ and $v_j$, remove the tube around $v_i$ and place that tube around $v_n$. Move the tube around $v_j$ to the now open node $v_i$ and move the tube on $v_n$ to $v_j$, completing the color exchange.
\end{proof}

%
%
\bibliographystyle{amsplain}

\end{document}